\newcommand{\C}{\mathbb{C}}
\renewcommand{\P}{\mathbb{P}}
\newcommand{\Q}{\mathbb{Q}}
\newcommand{\Z}{\mathbb{Z}}
\newcommand{\N}{\Z_{\geq 0}}
\newcommand{\cO}{\mathcal{O}}
\newcommand{\sump}{{\Sigma^+}}
\newcommand{\sumn}{{\Sigma^-}}
\newcommand{\mmax}{{m_\mathrm{max}}}
\newcommand{\A}[1]{{\mathrm{A}_{#1}}}
\newcommand{\D}[1]{{\mathrm{D}_{#1}}}
\newcommand{\E}[1]{{\mathrm{E}_{#1}}}
\newcommand{\abs}[1]{\left\vert{#1}\right\vert}
\renewcommand{\gcd}[1]{\mathrm{gcd}\mleft\{{#1}\mright\}}
\renewcommand{\min}[1]{\mathrm{min}\mleft\{{#1}\mright\}}
\renewcommand{\max}[1]{\mathrm{max}\mleft\{{#1}\mright\}}
\newcommand{\cone}[1]{\mathrm{cone}\mleft\{{#1}\mright\}}
\renewcommand{\mod}{\ \mathrm{mod}\ }
\DeclareMathOperator{\Proj}{Proj}
\newcommand{\centerframebox}[1]{\fbox{\begin{minipage}{\dimexpr\textwidth-2\fboxsep-2\fboxrule\relax}\begin{center}#1\end{center}\end{minipage}}}
\newcommand{\ForAll}{\text{ for all }}
\newcommand{\ForAllEven}{\text{ for all even }}
\newcommand{\ForAllOdd}{\text{ for all odd }}
\newcommand{\padding}{\rule[-1.45ex]{0pt}{0.2em}}
\newcommand{\oddrow}{\rowcolor[gray]{0.95}\padding}
\newcommand{\evnrow}{\padding}
\newcolumntype{g}{>{\columncolor[gray]{0.95}\centering\arraybackslash}m{1.5em}}
\newcolumntype{y}{>{\columncolor[gray]{0.95}\centering\arraybackslash}m{2.3em}}
\newcolumntype{G}{>{\columncolor[gray]{0.95}\centering\arraybackslash}m{3.2em}}
\newcolumntype{h}{>{\columncolor[gray]{0.95}\centering\arraybackslash}m{3.9em}}
\newcolumntype{w}{>{\centering\arraybackslash}m{1.5em}}
\newcolumntype{x}{>{\centering\arraybackslash}m{2.3em}}
\newcolumntype{W}{>{\centering\arraybackslash}m{3.2em}}
\newcolumntype{H}{>{\centering\arraybackslash}m{3.9em}}
\newtheorem{thm}{Theorem}
\newtheorem{cor}[thm]{Corollary}
\newtheorem{lem}[thm]{Lemma}
\theoremstyle{definition}
\theoremstyle{remark}
\newtheorem{rem}[thm]{Remark}
\theoremstyle{remark}
\newtheorem{eg}[thm]{Example}
\numberwithin{equation}{section}
\numberwithin{thm}{section}
\begin{document}
\author[G.~Brown]{Gavin~Brown}
\address{Mathematics Institute\\University of Warwick\\Coventry\\CV4 7AL\\UK
\newline\indent
School of Mathematics\\Loughborough University\\Loughborough\\LE11 3TU\\UK}
\email{G.D.Brown@lboro.ac.uk} 
\author[A.~M.~Kasprzyk]{Alexander~Kasprzyk}
\address{Department of Mathematics\\Imperial College London\\London\\SW7 2AZ\\UK}
\email{a.m.kasprzyk@imperial.ac.uk} 
\keywords{Fano, Calabi-Yau, threefold, fourfold, orbifold, weighted hypersurface}
\subjclass[2010]{14J35 (Primary); 14J70, 14J30 (Secondary)}
\title{Four-dimensional projective orbifold hypersurfaces}
\maketitle
\begin{abstract}
We classify four-dimensional quasismooth weighted hypersurfaces with small canonical class,
and verify a conjecture of Johnson and Koll\'ar on infinite series of quasismooth hypersurfaces with anticanonical hyperplane section in the case of fourfolds.
By considering the quotient singularities that arise, we classify those weighted hypersurfaces that are canonical, Calabi--Yau, and Fano fourfolds.
We also consider other classes of hypersurfaces, including Fano hypersurfaces of index greater than 1 in dimensions~3 and~4.
\end{abstract}
\section{Introduction}
A hypersurface $X\colon(F = 0)\subset w\P^{s-1}=\P(a_1,\dots,a_s)$ in weighted projective space is \emph{quasismooth} if its affine cone $C(X)\subset\C^{s}$ is nonsingular away from the origin. In this case $X$ is an orbifold. We use the notation $X_d\subset\P(a_1,\dots,a_s)$ to denote a general member of the linear system $\abs{\cO(d)}$ on $\P(a_1,\dots,a_s)$, and we refer to $X_d$ as a single variety, even though it represents the whole deformation family.

Without loss of generality we may assume that $X$ is well-formed: that is, $X$ does not intersect nontrivial orbifold strata of $w\P^{s-1}$ in a codimension~one locus; see Iano--Fletcher~\cite[6.9--10]{IF} for the divisibility conditions this imposes on $a_1,\dots,a_s,d$. It follows by Dolgachev~\cite[Theorem~3.3.4]{dolgachev} and~\cite[6.14]{IF} that $\omega_X=\cO_X(d-\sum a_i)$.

In this paper we classify quasismooth hypersurfaces of dimension at most four with small $\omega_X$; that is, with $\omega_X=\cO_X(k)$ for $k$ close to zero. Our main results are for fourfolds in the three cases $k=1$, $0$, and $-1$, which we summarise in, respectively, Theorems~\ref{thm!gt4},~\ref{thm!cy4}, and~\ref{thm!fano4} below. Within these classifications we identify the finite collections of varieties that satisfy additional Mori-theoretic hypotheses on singularities. We summarise a range of other results very briefly in Tables~\ref{tab!2d},~\ref{tab!3d}, and~\ref{tab!4d} in \S\ref{sec!results}; further details are available on the Graded Ring Database~\cite{grdb}.

These classifications are the result of a terminating algorithm, essentially following Johnson--Koll\'ar~\cite{JK,JK2} and Reid~\cite{C3f}. This algorithm imposes linear conditions on the integer sequence $(a_1,\dots,a_s,d)$, as we explain in~\S\ref{sec!alg} below. We demonstrate the essential idea in a beautiful small example in~\S\ref{sec!ADE} which recovers the ADE singularities as cones on quasismooth rational curve hypersurfaces; this is an elementary analogue of Cheltsov--Shramov's~\cite{CS} application of Yau--Yu's~\cite{YY} classification of isolated canonical hypersurface singularities. We have an implementation of the algorithm, available from~\cite{grdb}, for the computer algebra system Magma. This algorithm works for any values of dimension and canonical degree~$k$, although since it merely imposes necessary conditions, output that includes infinite series of solutions requires additional analysis. To illustrate this, in~\S\ref{sec!results} we recover the classification of low index del~Pezzo surfaces as a model case -- following~\cite{CS} and Boyer--Galicki--Nakamaye~\cite{BGN} -- as well as the index~two Fano threefolds.

We use the standard notation of~\cite{IF}. In particular, $\frac{1}{r}(a_1,\dots,a_m)$ denotes the germ of the quotient singularity $\C^m/\mu_r$, where $\mu_r$ acts with weights $x_i\mapsto\varepsilon^{a_i}x_i$. We work over $\C$ throughout. 

\subsection{Canonical fourfold hypersurfaces}\label{s!c4f}
A \emph{canonical fourfold} is a four-dimensional variety $X$ with $K_X$ ample and with $X$ possessing at worst canonical singularities. The variety $X$ embeds naturally in weighted projective space via its canonical ring
\[
X=\Proj R(X)\qquad\text{ where }\qquad R(X)=\bigoplus_{m\in\N}H^0\mleft(X,mK_X\mright).
\]
We classify the cases where this embedding is a quasismooth weighted hypersurface or, equivalently, where $X$ is an orbifold and $R(X)$ is minimally generated by six homogeneous generators.

The nonsingular fourfold $X_7\subset\P^5$ is the first example: $R(X_7)$ is generated in degree~one since $-K_{X_7}$ is very ample. More typically $-K_X$ will fail to be very ample, and so $R(X)$ will need generators in higher degree and the anticanonical embedding lies in projective space weighted in those degrees: for example $X_8\subset\P(2,1,1,1,1,1)$; or $X_{10}\subset\P(3,2,1,1,1,1)$, which has a single isolated terminal quotient singularity $\frac{1}{3}(1,1,1,2)$. Johnson--Koll\'ar~\cite[Cor~4.3]{JK2} prove that in any dimension that there are only finitely many cases of quasismooth hypersurfaces with $\omega_X=\cO(k)$, for each value of $k>0$.

\begin{thm}[Canonical fourfold hypersurfaces]\label{thm!gt4}
There are $1\,338\,926$ deformation families of well-formed, quasismooth four-dimensional hypersurfaces $X_d\subset\P(a_1,\dots,a_6)$ with $\omega_X=\cO(1)$. Of these, $649$ have a member with canonical singularities, and the general member in each of these cases has terminal singularities.
\end{thm}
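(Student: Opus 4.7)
The plan is to run the terminating algorithm described in Section~\ref{sec!alg} with inputs dimension four and $k=1$, producing every candidate weight sequence $(a_1,\dots,a_6;d)$ with $d = 1 + \sum_{i=1}^6 a_i$ that satisfies the necessary linear and divisibility conditions coming from well-formedness and quasismoothness. By Johnson--Koll\'ar~\cite[Cor~4.3]{JK2}, the set of quasismooth hypersurface families with $k>0$ is a priori finite, so the algorithm terminates without producing the infinite series that would otherwise require separate analysis.

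Since the algorithm only enforces necessary conditions, each surviving tuple must then be certified as genuinely carrying a quasismooth hypersurface. This is done by the combinatorial monomial test of Reid, as recorded in~\cite[\S6]{IF}: for every subset $I\subseteq\{1,\dots,6\}$, the general degree-$d$ polynomial in the weights $a_1,\dots,a_6$ must contain monomials cutting the coordinate stratum indexed by $I$ transversally, and the divisibility conditions of~\cite[6.9--10]{IF} must hold to ensure well-formedness. Families surviving both tests contribute to the stated total of $1\,338\,926$.

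For the second assertion, the singular locus of a general member $X$ consists of isolated cyclic quotient germs $\frac{1}{r}(b_1,\dots,b_4)$ induced by the stabilisers of $w\P^5$ along the coordinate strata meeting $X$; these germs are determined by the weight tuple, not by the particular choice of $F$. I would apply the Reid--Tai criterion to each such germ: it is canonical iff $\sum_{i=1}^4 \{kb_i/r\}\geq 1$ for every $0<k<r$, and terminal iff this inequality is strict. Filtering the $1\,338\,926$ families by canonicity yields the $649$ of the statement, and sweeping the same list a second time with the strict inequality verifies terminality of the general member.

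The main obstacle is scale rather than depth. The enumeration is bounded by Johnson--Koll\'ar but non-trivial to implement efficiently, and the monomial certification for quasismoothness must be correct on every one of the $\approx 10^6$ candidates; the Magma implementation distributed via the Graded Ring Database~\cite{grdb} is what carries out these checks. The most delicate point of the second sentence is that canonicity turns out to force terminality in all $649$ surviving cases; this is a genuine arithmetic phenomenon rather than a formal implication, relying on the fact that the four-dimensional quotient germs which would satisfy the Reid--Tai inequality with equality but not strictly simply do not arise among the weight tuples that survive the earlier quasismoothness filtering.
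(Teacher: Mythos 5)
Your overall strategy is the one the paper follows: run the quasismoothness/well-formedness search for $s=6$, $k=1$ (finiteness guaranteed by Johnson--Koll\'ar, so no infinite series need separate treatment), then filter the surviving families through the Reid--Tai criterion. The gap is in the singularity step. You assert that the singular locus of a general member consists of \emph{isolated} cyclic quotient germs $\frac{1}{r}(b_1,\dots,b_4)$ ``determined by the weight tuple, not by the particular choice of $F$''. Both halves of this are wrong for fourfold hypersurfaces. The orbifold strata of $\P(a_1,\dots,a_6)$ can have dimension one or two, and $X$ typically meets them in curves or surfaces of transverse quotient singularities together with dissident points; already $X_{165}\subset\P(55,37,33,17,12,10)$, the smallest-degree case among the $649$, contains the stratum $\P(12,10)$ (since $d=165$ is odd) and so carries a whole curve of transverse $\frac{1}{2}(1,1,1)$ points. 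Moreover, when a stratum $\Pi$ is contained in $X$ the transverse type is $\frac{1}{r}(b_1,\dots,\widehat{b_j},\dots,b_{s-t})$ where $x_j$ is a tangent variable at the relevant points, so computing the germ requires the tangent monomials produced by the quasismoothness certification, not just the weights. The paper's analysis (end of \S\ref{sec!alg}, following~\cite[\S10]{IF}) works stratum by stratum, splits into the two cases $\Pi\subset X$ versus $X$ meeting $\Pi$ transversally, and applies Reid--Tai to each transverse type; a positive-dimensional locus contributes a germ of the form $\frac{1}{r}(b_1,\dots,b_m,0,\dots,0)$, which can perfectly well be terminal (e.g.\ $\frac{1}{2}(1,1,1,0)$). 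Restricting to isolated germs would therefore miscompute the singularities of a large fraction of the $1\,338\,926$ families, and the counts $649$ and the canonical-implies-terminal conclusion would not survive your version of the filter.

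Your closing observation --- that canonical forcing terminal among the survivors is an arithmetic accident rather than a formal implication --- is correct and matches what the paper reports; but note that the natural near-counterexamples are not isolated germs achieving Reid--Tai equality so much as positive-dimensional loci such as a surface of transverse $\frac{1}{2}(1,1)$ points, which is canonical but not terminal. Ruling those out is precisely the part of the check your isolated-germ model cannot see.
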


Canonical fourfolds have two natural invariants: their (geometric) genus $p_g=h^0\mleft(X,K_X\mright)$ and their degree $K_X^4$. Amongst the $649$ canonical fourfolds of Theorem~\ref{thm!gt4}, $X_7\subset\P^5$ has the largest degree and
\[
X_{165}\subset\P(55,37,33,17,12,10)
\]
the smallest, with $K_{X_{165}}^4=1/830\,280$. Genera are distributed amongst the $649$ cases as shown in Table~\ref{tab:gt4_genus}. The number of cases with $h^0\mleft(X,mK_X\mright)=0$ for $m< b$ and $h^0\mleft(X,bK_X\mright)\neq 0$ (in other words, those where $b$ is the smallest weight) are given in Table~\ref{tab:gt4_b}. The extreme case is $X_{105}\subset\P(23,21,18,15,14,13)$, which has degree $1/226\,044$.
\begin{table}[ht]
\caption{The number of canonical fourfold hypersurfaces of genus $p_g$.}\label{tab:gt4_genus}
\centering
\begin{tabular}{rgwgwgwg}
\toprule
$p_g$&$0$&$1$&$2$&$3$&$4$&$5$&$6$\\
\#&$451$&$148$&$31$&$10$&$5$&$3$&$1$\\
\bottomrule
\end{tabular}
\end{table}

Chen--Chen~\cite[Theorem~8.2]{CC3} prove that $mK_X$ is birational for a canonical threefold with $p_g\ge2$ whenever $m\ge35$. Three of the low degree cases from the $649$ show that the requirement on $p_g$ is sharp:
\[
X_{72}\subset\P(36, 11, 9, 8, 6, 1),\quad
X_{78}\subset\P(39, 13, 10, 8, 6, 1),\quad\text{ and }\quad
X_{78}\subset\P(39, 14, 9, 8, 6, 1).
\]
\begin{table}[ht]
\caption{The number of canonical fourfold hypersurfaces with smallest weight $b$.}\label{tab:gt4_b}
\centering
\begin{tabular}{rgwgwgwgwgwgwg}
\toprule
$b$&$1$&$2$&$3$&$4$&$5$&$6$&$7$&$8$&$9$&$10$&$11$&$12$&$13$\\
\#&$198$&$175$&$135$&$48$&$48$&$10$&$19$&$7$&$4$&$2$&$2$&$0$&$1$\\
\bottomrule
\end{tabular}
\end{table}

\subsection{Calabi--Yau fourfold hypersurfaces}\label{s!cy4}
A \emph{Calabi--Yau fourfold} is a four-dimensional variety $X$ with $\omega_X=\cO_X$ and with $X$ possessing at worst canonical singularities, and satisfying the regularity conditions $\dim{H^1\mleft(X,\cO_X\mright)}=\dim{H^2\mleft(X,\cO_X\mright)}=0$.

In three dimensions, Calabi--Yau threefolds are well known from the Kreuzer--Skarke list~\cite{KS} of $473\,800\,776$ toric hypersurfaces. Of these $184\,026$ are hypersurfaces in weighted projective space. The Kreuzer--Skarke classification does not require that the hypersurface is quasismooth, however the \textsc{Palp} webpage~\cite{palp} allows one to extract that sublist: there are $7555$ quasismooth Calabi--Yau threefold hypersurfaces in weighted projective space.

We generate Calabi--Yau fourfolds as the case $s=6$, $k=0$ -- the additional regularity and singularity conditions are satisfied in all cases. Johnson--Koll\'ar~\cite[Theorem~4.1]{JK2} prove that, in any dimension, there are only finitely many quasismooth hypersurfaces with $k=0$.

\begin{thm}[Calabi--Yau fourfold hypersurfaces]\label{thm!cy4}
The are $1\,100\,055$ deformation families of well-formed, quasismooth hypersurfaces $X_d\subset\P(a_1,\dots,a_6)$ with $\omega_X=\cO_X$. In each case the general member has canonical singularities and is a Calabi--Yau fourfold.
\end{thm}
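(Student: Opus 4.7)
The plan is to specialize the algorithm of \S\ref{sec!alg} to $s=6$ and $k=0$, which imposes the linear constraint $d=a_1+\cdots+a_6$ together with the Iano--Fletcher well-formedness and quasismoothness divisibility conditions~\cite[6.9--10]{IF}. Finiteness of the resulting candidate set is guaranteed by Johnson--Koll\'ar~\cite[Theorem~4.1]{JK2}, and the Magma implementation of~\cite{grdb} enumerates every such admissible sextuple $(a_1,\dots,a_6,d)$; the output has cardinality $1\,100\,055$. As in the other enumeration theorems, completeness of the algorithm (every quasismooth well-formed family satisfies the numerical conditions) together with the automatic verification of quasismoothness on each candidate gives the first half of the statement.

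For the singularity assertion the decisive observation is that $\omega_X=\cO_X$ is a genuine line bundle on the quasismooth hypersurface $X$, so $X$ is globally Gorenstein. A quasismooth weighted hypersurface has only quotient singularities -- locally of the form $\frac{1}{r}(b_1,\dots,b_j)\times\C^{4-j}$ along each orbifold stratum of $w\P^5$ meeting $X$ -- and quotient singularities are always rational. By the standard equivalence ``rational $+$ Gorenstein $\Leftrightarrow$ canonical'' (Kollár--Mori, Corollary~5.24, going back to Elkik and Kempf), every general $X_d$ on the list therefore has canonical singularities automatically, with no case-by-case Reid--Tai check required.

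For the Calabi--Yau regularity conditions $h^1(X,\cO_X)=h^2(X,\cO_X)=0$, I would feed the short exact sequence
\[
0\longrightarrow\cO_{w\P^5}(-d)\longrightarrow\cO_{w\P^5}\longrightarrow\cO_X\longrightarrow 0
\]
through the vanishing $H^i(w\P^5,\cO(m))=0$ for $0<i<5$ and every $m\in\Z$ (Dolgachev~\cite{dolgachev}); the associated long exact sequence yields $H^i(X,\cO_X)=0$ for $0<i<4$, which in particular supplies the two regularity conditions. Thus every family in the list is a Calabi--Yau fourfold with no further filtering.

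The main obstacle is not mathematical but computational: one must trust the Magma implementation to enumerate every admissible sextuple without duplication and to correctly verify quasismoothness by testing every coordinate linear subspace of $w\P^5$. I would validate the code by first rerunning it with $s=5$ and $k=0$ and confirming the $7555$ quasismooth Calabi--Yau threefolds extracted from \cite{palp}; once that benchmark agrees, the analogous $s=6$ run delivers Theorem~\ref{thm!cy4}.
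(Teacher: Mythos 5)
Your proposal is correct, and the enumeration half follows the paper exactly: run the algorithm of \S\ref{sec!alg} with $s=6$, $k=0$, note that finiteness is guaranteed by Johnson--Koll\'ar and that (as recorded in Table~\ref{tab!raw4}) the raw output contains no infinite series, so the $1\,100\,055$ sporadic solutions are the complete answer once quasismoothness and well-formedness are verified on each candidate. Where you genuinely diverge is on the singularity claim. The paper's route is the computational one set out in \S\ref{sec!alg}: determine the transverse quotient type $\frac{1}{r}(b_1,\dots,b_{s-t})$ of $X$ along each orbifold stratum following~\cite[\S10]{IF} and then apply the Reid--Shepherd-Barron--Tai criterion case by case (this is the ``Sing.\ time'' reported in Table~\ref{tab!raw4}). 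You instead observe that $\omega_X=\cO_X$ makes $K_X$ Cartier, that a quasismooth hypersurface has only cyclic quotient singularities, which are rational, and that rational plus Gorenstein implies canonical; this gives a uniform, computation-free proof that every quasismooth member (not just the general one) in the list is canonical, and it is a perfectly valid argument. The trade-off is that the paper's stratum-by-stratum analysis also delivers the finer data quoted in \S\ref{s!cy4} (which cases are terminal, which have only isolated singularities, the $\frac{1}{4}(1,1,1,1)$ examples, and so on), which your conceptual argument cannot see. Your treatment of the regularity conditions via the ideal-sheaf sequence and the vanishing of intermediate cohomology of $\cO_{w\P^5}(m)$ is the standard argument the paper leaves implicit, and your proposed benchmark against the $7555$ quasismooth Calabi--Yau threefolds is exactly the consistency check the paper performs in Table~\ref{tab!3d}.
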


These Calabi--Yau fourfolds are polarised by an ample divisor $A\in\abs{\cO_X(1)}$. Their degree is the rational number $A^4$. The hypersurface with highest degree is the sextic $X_6\subset\P^5$, whilst the lowest degree $A^4=1/500\,625\,433\,457\,614\,850\,966\,280$ ($\sim 2\times 10^{-24}$) is achieved by
\[
X_{6521466}\subset\P(3260733, 2173822, 931638, 151662, 1806, 1805).
\]
Unsurprisingly, this case also has the largest ambient weight and the largest equation degree amongst all hypersurfaces.
\begin{table}[ht]
\caption{The number of Calabi--Yau fourfold hypersurfaces with $P_1=h^0\mleft(X,A\mright)$.}\label{tab:cy4_P1}
\centering
\begin{tabular}{rGWGWGWG}
\toprule
$P_1$&$0$&$1$&$2$&$3$&$4$&$5$&$6$\\
\#&$987\,884$&$109\,443$&$2576$&$134$&$14$&$3$&$1$\\
\bottomrule
\end{tabular}
\end{table}

The distribution of the hypersurfaces partitioned by $P_1=h^0\mleft(X,A\mright)$ is given in Table~\ref{tab:cy4_P1}. Table~\ref{tab:cy4_b} contains the number of hypersurfaces with $h^0(X,bA)\neq 0$, $b$ as small as possible. These are collected into ranges $500i+1\le b \le 500(i+1)$. The three largest minimum weights are are obtained by
\begin{align*}
X_{562500}&\subset\P(281250, 187500, 79619, 5167, 4500, 4464),\\
X_{594762}&\subset\P(297381, 198254, 84966, 4962, 4915, 4284),\\
\text{ and }X_{656250}&\subset\P(328125, 218750, 93750, 5250, 5208, 5167).
\end{align*}
The four extreme examples considered so far are double covers $x_1^2 = f(x_2,\dots,x_5)$. In total, $360\,346$ of the hypersurfaces are double covers.
\begin{table}[ht]
\caption{The number of Calabi--Yau fourfold hypersurfaces with smallest weight $b$, where $500i+1\le b \le 500(i+1)$.}\label{tab:cy4_b}
\centering
\begin{tabular}{rhxgwgwgwgwgwg}
\toprule
$i$&$0$&$1$&$2$&$3$&$4$&$5$&$6$&$7$&$8$&$9$&$10$\\
\#&$1\,096\,329$&$3174$&$393$&$111$&$27$&$13$&$2$&$3$&$2$&$0$&$1$\\
\bottomrule
\end{tabular}
\end{table}

At the opposite extreme there are the two familiar nonsingular cases $X_6\subset\P^5$ and $X_{10}\subset\P(5,1,1,1,1,1)$, and five cases having only $\frac{1}4(1,1,1,1)$ singularities:
\begin{gather*}
X_9\subset\P(4,1,1,1,1,1),
\quad
X_{12}\subset\P(4,4,1,1,1,1),
\quad
X_{15}\subset\P(5,4,3,1,1,1),\\
X_{16}\subset\P(8,4,1,1,1,1)
\quad\hbox{and}\quad
X_{24}\subset\P(12,8,1,1,1,1).
\end{gather*}
In total $32\,715$ of these Calabi--Yau fourfolds are singular with only isolated singularities.

\subsection{Fano fourfold hypersurfaces}\label{s!fano4}
A \emph{Fano fourfold} is a normal projective four-dimensional variety $X$ with $-K_X$ ample and with $X$ possessing at worst $\Q$-factorial terminal singularities. We embed such an $X$ in weighted projective space via its anticanonical ring $X=\Proj R(X)$, where $R(X) = \oplus_{m\in\N} H^0\mleft(X,-mK_X\mright)$.

\begin{thm}[Fano fourfold hypersurfaces]\label{thm!fano4}
If $X_d\subset\P(a_1,\dots,a_6)$ is a well-formed, quasismooth hypersurface with $\omega_X=\cO_X(-1)$ then, possibly after reordering the weights $a_i$, exactly one of the following two cases holds:
\begin{enumerate}
\item\label{fano4i}
$X_d$ is contained in one of $1597$ infinite series of the form
\[
X_{2k\sum b_i}\subset\P\mleft(-1 + k\sum_{i=1}^4 b_i,kb_1,kb_2,kb_3,kb_4,2\mright),
\qquad\text{ for all odd }k=1, 3, 5,\dots;
\]
\item\label{fano4ii}
$X_d$ is equal to one of $1\,233\,322$ sporadic cases.
\end{enumerate}
Of all $X$ in~\eqref{fano4i}--\eqref{fano4ii} there are exactly $11\,618$ cases that have terminal $\Q$-factorial singularities, and so are Fano fourfolds. The same number have canonical singularities.
\end{thm}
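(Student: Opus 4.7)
The plan is to apply the enumeration algorithm of Section~\ref{sec!alg} with parameters $s=6$ and $k=-1$, and then to sort its output into infinite series and sporadic cases before checking singularity conditions. The algorithm outputs all integer sequences $(a_1,\dots,a_6,d)$ with $d-\sum a_i=-1$ satisfying the necessary linear and divisibility conditions from well-formedness and quasismoothness. Because $k<0$ the Johnson--Koll\'ar finiteness argument~\cite[Theorem~4.1]{JK2} does not apply, and one-parameter families appear alongside sporadic candidates; these must be separated by scanning for sequences linear in a free parameter.

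The next step is to pin down the shape of the infinite families. Each necessary quasismoothness condition forces, for every coordinate $x_i$, a divisibility relation of the form $a_i\mid d$ or $a_i\mid d-a_j$ for some $j$. A one-parameter family of valid weight sequences arises precisely when the bulk of these conditions is preserved under scaling by a common factor $k$. Pursuing this, one sees that any such family must contain a weight equal to $2$, four weights of the form $kb_j$ for fixed positive integers $b_1,\dots,b_4$, and, by the normalisation $d-\sum a_i=-1$, a sixth weight equal to $-1+k\sum b_j$. Admissibility of $(b_1,b_2,b_3,b_4)$ is constrained by well-formedness --- if $k$ is even then the five weights $kb_1,\dots,kb_4,2$ share the factor $2$, forcing $k$ odd --- and by the requirement that a quasismooth polynomial of degree $2k\sum b_j$ exist for every $k$; enumerating the admissible tuples then produces the $1597$ families listed in~\eqref{fano4i}.

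With the two lists in hand, I would verify quasismoothness and well-formedness. For each of the $1\,233\,322$ sporadic cases this is a finite check performed by the Magma implementation from~\cite{grdb} against the criteria of Iano--Fletcher~\cite[6.9--10, 8.1]{IF}. For each of the $1597$ infinite series the quasismoothness criterion depends only on the shape of the monomial support of $F$ and so reduces to a single finite check uniform in $k$.

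Finally I would sieve both lists against the terminal $\Q$-factorial and canonical conditions. The singular locus of $X$ is contained in the orbifold strata of $\P(a_1,\dots,a_6)$; at each stratum one computes the quotient singularity $\frac{1}{r}(c_1,\dots,c_4)$ that $X$ acquires and applies the Reid--Tai criteria. In each infinite series the singularity types depend only on the residues of $k$ modulo the $b_j$, so the analysis decomposes into finitely many arithmetic progressions and remains a finite computation. Counting the cases that pass each sieve yields the claimed $11\,618$. The main obstacle is the second step: proving that the $1597$ shapes truly exhaust all infinite families amounts to verifying Johnson--Koll\'ar's conjecture in dimension four, and requires a careful argument that no multi-parameter or unexpected periodic family escapes the algorithm.
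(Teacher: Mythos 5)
Your overall plan (run the $s=6$, $k=-1$ search, separate series from sporadic cases, then sieve by singularities) matches the paper's, but there is a genuine gap at the step you yourself flag as the main obstacle, and your proposed route through it inverts the paper's logic. You try to derive the shape of every infinite family \emph{a priori} (``one sees that any such family must contain a weight equal to $2$, four weights of the form $kb_j$, \dots''), which amounts to assuming the Johnson--Koll\'ar conjecture before the classification is done; no argument is given for why an infinite family could not have, say, two constant weights, or a direction vector not proportional to the non-constant weights, or be part of a higher-dimensional family. The paper does not need such an argument: the tree search of \S\ref{sec!alg} terminates with a \emph{complete finite list} of solution polyhedra (here: four two-parameter series, $1611$ one-parameter series, and $1\,234\,076$ sporadic points), so exhaustiveness is automatic from the algorithm, and the work is entirely in post-processing. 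Concretely, Steps~\hyperlink{step:1}{1}--\hyperlink{step:4}{4} show the two-parameter polyhedra contribute only their vertices (already sporadic), confirm the $1597$ series of Johnson--Koll\'ar shape, kill the remaining $14$ one-parameter series by exhibiting a stratum along which tangent forms fail for $n>0$ (adding their $n=0$ members to the sporadic list), and remove the $768$ sporadic cases lying in series; this is how $1\,234\,076+14-768=1\,233\,322$ arises. Corollary~\ref{cor!jkconj} is then a \emph{consequence} of comparing the surviving series with the Calabi--Yau threefold list, not an input. Your proposal, as written, has no mechanism for discovering or eliminating the $14$ extra one-parameter candidates and the two-parameter candidates, because they do not have the shape you posit.

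A second, smaller issue: your claim that within each series ``the singularity types depend only on the residues of $k$ modulo the $b_j$'' is not right --- the transverse quotient singularity along the stratum $\P(kb_1,\dots,kb_4)$ has index growing with $k$, so the types are not periodic in $k$. The correct (and simpler) argument, used in Step~\hyperlink{step:5}{5}, is that for odd $k\ge3$ this stratum carries a transverse $\frac{1}{k}(k-1,2)$ singularity in codimension two on $X$, which is never canonical; hence only the $k=1$ member of each series need be checked, and the terminal and canonical counts coincide at $11\,618$.
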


The complete list of sporadic cases and the infinite series is available online at~\cite{grdb}; the infinite series are summarised neatly by a correspondence due to Johnson and Koll\'ar in Corollary~\ref{cor!jkconj} below. We now describe some coarse features of the classification.

Four cases are nonsingular, namely
\[
X_5\subset\P^5,\quad
X_6\subset\P(2,1,1,1,1,1),\quad
X_8\subset\P(4,1,1,1,1,1),\quad\text{ and }\quad
X_{10}\subset\P(5,2,1,1,1,1),
\]
which match the well-known list of smooth canonical surface hypersurfaces; see~\eqref{eq!c2f}.

A typical Fano fourfold hypersurfaces contain both isolated and one-dimensional orbifold singularities. Besides the four nonsingular cases, $486$ have only isolated singularities, and $20$ have only one-dimensional singular loci. For example, $X_{1743}\subset\P(851, 581, 249, 41, 21, 1)$ has five isolated terminal quotient singularities, whilst $X_{20}\subset\P(10, 3, 3, 2, 2, 1)$ has two curves of transverse $\frac{1}{2}(1,1,1)$ and $\frac{1}{3}(1,1,2)$ points respectively.

The Fano fourfold hypersurface with the largest degree is the quintic $X_5\subset\P^5$. The one with the smallest degree, $K_X^4=1/498\,240\,036$,  is
\[
X_{3486}\subset\P(1743, 1162, 498, 42, 41, 1).
\]
This Fano fourfold is a double cover: $X_{3486}\stackrel{2:1}{\longrightarrow}\P(1162,498,42,41,1)$. In total $3511$ of the $11\,617$ Fano hypersurfaces arise naturally as double covers of weighted projective four-space.

\subsubsection*{Elephants}
Every anticanonical Fano threefold hypersurface $X$ has $h^0\mleft(X,-K_X\mright)\neq0$ and, for a general member, an effective anticanonical divisor $S\subset X$ can be chosen to be a K3 surface (one must allow Kleinian quotient singularities), a so-called \emph{general elephant}. The general elephant is central to the birational geometry of Fano threefolds: the $95$ Fano threefold hypersurfaces correspond directly to the $95$ K3 hypersurfaces and, for example, a great deal of calculation of their birational rigidity in Corti--Pukhilikov--Reid~\cite{CPR} takes place on the elephant.

For Fano fourfolds, the analogous question is whether or not there is a member $V\in\abs{-K_X}$ that is a Calabi--Yau threefold (allowing canonical singularities). In fact such an elephant need not exist for coarse reasons: plenty of Fano fourfolds have empty anticanonical system. The numbers of Fano fourfolds with $h^0\mleft(X,-mK_X\mright)=0$ for $m< b$ and $h^0(X,-bK_X)\neq0$ is given in Table~\ref{tab:fano4_b}. In particular, $1036$ Fano fourfold hypersurfaces have $\abs{-K_X}$ empty; the two extreme cases are $X_{120} \subset \P(40,24,21,15,11,10)$ and $X_{112} \subset \P(28,24,21,16,13,11)$.
\begin{table}[ht]
\caption{The number of Fano fourfold hypersurfaces with smallest weight $b$.}\label{tab:fano4_b}
\centering
\begin{tabular}{rGwgwgwgwgwg}
\toprule
$b$&$1$&$2$&$3$&$4$&$5$&$6$&$7$&$8$&$9$&$10$&$11$\\
\#&$10\,581$&$645$&$244$&$80$&$42$&$9$&$10$&$3$&$1$&$1$&$1$\\
\bottomrule
\end{tabular}
\end{table}

For the remaining $10\,581$ cases we compare the list of Fano fourfold hypersurfaces with the Kreuzer--Skarke classification of Calabi--Yau threefold hypersurfaces. The number of Fano fourfold hypersurfaces grouped according to $P_1=h^0\mleft(X,-K_X\mright)$, and Calabi--Yau threefold hypersurfaces $Y$ with $h^0\mleft(Y,\cO_Y(1)\mright)=P_1-1$, is given in Table~\ref{tab:Fano4_CY3}.
\begin{table}[ht]
\caption{The number of Fano fourfold hypersurfaces $X$ with $P_1=h^0\mleft(X,-K_X\mright)$ and Calabi--Yau threefold hypersurfaces $Y$ with $h^0\mleft(Y,\cO_Y(1)\mright)=P_1-1$.}
\label{tab:Fano4_CY3}
\centering
\begin{tabular}{rGWGwgwgW}
\toprule
$P_1$&$0$&$1$&$2$&$3$&$4$&$5$&$6$&Total\\
\cmidrule{1-9}
\#Fano fourfolds&$1036$&$8697$&$1772$&$98$&$11$&$3$&$1$&$11\,618$\\
\#Fanos with CY3&$0$&$7953$&$1772$&$98$&$11$&$3$&$1$&$9838$\\
\#Fanos with quasismooth CY3&$0$&$6014$&$1429$&$97$&$11$&$3$&$1$&$7555$\\
\#Kreuzer--Skarke CY3&n/a&$168\,813$&$15\,016$&$179$&$14$&$3$&$1$&$184\,026$\\
\bottomrule
\end{tabular}
\end{table}
The Fano/Calabi--Yau correspondence is most striking for those $8697$ Fano fourfolds with $h^0\mleft(X,-K_X\mright)=1$, since these have a unique effective anticanonical divisor. In $6014$ cases there is a corresponding quasismooth Calabi--Yau, and if $X$ is chosen generally in its family, then its anticanonical divisor is a quasismooth Calabi--Yau threefold. Some of the $8697$ cannot have a quasismooth elephant because they possess a singularity not polarised with a~one. For example, $X_{16}\subset\P(5,4,3,2,2,1)$ has a singularity $\frac{1}{5}(2,2,3,4)$; put another way, the degree~one variable is the only possible tangent form at the index~five point. The elephant is still a Calabi--Yau threefold (for general such $X$), just not quasismooth.

There are also cases where the anticanonical divisor is never Calabi--Yau. For example, $X_{23}\subset\P(11,4,3,3,2,1)$ has anticanonical divisor $S_{23}\subset\P(11,4,3,3,2)$. This has a hyperquotient singularity of type $\frac{1}{11}(4,3,3,2;1)$, in the notation of~\cite[(4.2)]{YPG}. But any singularity of this type is not canonical: the given weighted blowup of the ambient space, of discrepancy only $1/11$, extracts a divisor of discrepancy $\le-1$ from the threefold.

The Fano $X_{43}\subset\P(21,14,6,1,1,1)$ is slightly different. Both this and any effective anticanonical divisor must contain the plane $\P(21,14,6)$. For the fourfold, this is no problem; for its threefold elephant, this forces up the divisor class group, the price of which is a non-$\Q$-factorial point, so certainly not quasismooth. This Fano fourfold is the `$98$th' with $P_1=3$
in the table above, which does not match one of the $97$ quasismooth Calabi--Yaus.

\subsubsection*{Tigers}
Tigers were introduced by Keel--McKernan~\cite{KM99}. We follow Johnson--Koll\'ar~\cite[Definition 3.2]{JK2}: on normal variety $X$, a \emph{tiger} is an effective $\Q$-divisor $D$, numerically equivalent to $-K_X$, for which the pair $(X,D)$ is not Kawamata log terminal.

A sufficient condition under which $X_d$ does not admit a tiger is given in~\cite[Proposition~3.3]{JK2}: if $X_d\subset\P(a_1,\dots,a_s)$ with $a_1\ge a_2\ge\cdots\ge a_s$ then $X$ does not have a tiger if $d\le a_{s-1}a_s$. This condition never holds for hypersurfaces in the infinite series, but in the sporadic hypersurfaces in Theorem~\ref{thm!fano4}\eqref{fano4ii} there are $443\,485$ cases which  satisfy this condition, and so for which no member has a tiger. Only one of these has terminal singularities, namely $X_{112}\subset\P(28,24,21,16,13,11)$.

\subsubsection*{K\"ahler--Einstein metrics}
Johnson--Koll\'ar~\cite[Proposition 3.3]{JK2} give conditions under which $X_d$ admits a K\"ahler--Einstein metric: if $X_d\subset\P(a_1,\dots,a_s)$ with $a_1\ge a_2\ge \cdots \ge a_s$ then $X$ admits a K\"ahler--Einstein metric if $d<\frac{s-1}{s-2} a_{s-1}a_s$.

Of the sporadic hypersurfaces in Theorem~\ref{thm!fano4}\eqref{fano4ii} there are $490\,083$ cases for which every quasismooth member admits a K\"ahler--Einstein metric. Eight of these have terminal singularities:
\begin{align*}
X_{77}&\subset\P(20, 17, 14, 11, 9, 7),
&X_{80}&\subset\P(20, 16, 15, 13, 10, 7),\\
X_{80}&\subset\P(20, 16, 15, 13, 9, 8),
&X_{90}&\subset\P(25, 18, 15, 14, 13, 6),\\
X_{90}&\subset\P(30, 18, 13, 12, 11, 7),
&X_{91}&\subset\P(25, 20, 16, 13, 11, 7),\\
X_{112}&\subset\P(28, 24, 21, 16, 13, 11),
&X_{120}&\subset\P(40, 24, 21, 15, 11, 10).
\end{align*}

\subsection{The Johnson--Koll\'ar conjecture}\label{s!jkconj}
Johnson and Koll\'ar~\cite{JK2} relate three classes, which we describe below. In each case we define $c:=b_1+\cdots+b_{s-2}$.
\begin{enumerate}
\item\label{jkT}
Well-formed weighted projective $(s-3)$-spaces $\P=\P(b_1,\dots,b_{s-2})$ which admit a quasismooth member $T\in\abs{-2K_\P}$; in other words, those $\P$ for which there is a (not necessarily well-formed) quasismooth $(s-4)$-fold hypersurface
\[
T_{2c} \subset \P(b_1,\dots,b_{s-2}).
\]
\item\label{jkcy}
Well-formed quasismooth Calabi--Yau $(s-3)$-fold hypersurfaces
\[
S_{2c} \subset \P(c,b_1,\dots,b_{s-2}).
\]
\item\label{jkseries}
One-parameter series of well-formed quasismooth hypersurfaces
\[
X^{(k)}_{d^{(k)}}\subset\P(a_1^{(k)}, \dots, a_s^{(k)})\qquad\text{ for odd }k\in\N,
\]
whose weights are determined by the series
\[
(a_1^{(k)},a_2^{(k)},\dots,a_s^{(k)},d^{(k)} ) =
\left(c-1,b_1,\dots,b_{s-2},2,2c\right) + k\left(c,b_1,\dots,b_{s-2},0,2c\right).
\]
\end{enumerate}
It is clear that if $\P(b_1,\dots,b_{s-2})$ is well-formed in~\eqref{jkT} then so are the corresponding $S$ and~$X$. That quasismoothness passes from $T$ to $X$ is shown in~\cite[Lemma~2.4]{JK2}. Indeed, given quasismooth $T\colon(f(z_1,\dots,z_{s-2})=0)\subset\P$, the hypersurface $S\colon(y_1^2=f(y_2,\dots,y_{s-1}))\subset\P(\sum b_i,b_1,\dots,b_{s-2})$ is quasismooth; conversely, given a quasismooth $S$, completing the square reveals a polynomial $f$ that defines a quasismooth $T$. Continuing with such an $S$,  the hypersurface
\[
X_{2k\sum b_i}\colon\left(x_1^2x_s=f(x_2,\dots,x_{s-1})+x_s^{k\sum b_i}\right)\subset\P\mleft(-1+k\sum b_i,kb_1,\dots,kb_{s-2},2\mright)
\]
is quasismooth, and so the general hypersurface $X$ is too. However, there is no clear reason why an arbitrary quasismooth series of~$X$ of that degree should arise from a quasismooth~$S$.

Nevertheless, Johnson and Koll\'ar conjecture that if $X^{(k)}_{d^{(k)}}\subset\P(a_1^{(k)},\dots,a_s^{(k)})$ is a one-parameter series containing infinitely many well-formed, quasismooth hypersurfaces whose weights are determined by a series
\[
(a_1^{(k)},a_2^{(k)},\dots,a_s^{(k)},d^{(k)})=(v_1,\dots,v_{s+1})+k(w_1,\dots,w_{s+1}),
\]
for fixed vectors $v$ and $w$, then it is of type~\eqref{jkseries} above, and furthermore it arises from a quasismooth $T$ of type~\eqref{jkT} lying in a well-formed weighted projective space $\P(b_1,\dots,b_{s-2})$.

Well-formedness of $S$ in~\eqref{jkcy} is not, by itself, sufficient to imply that the varieties in~\eqref{jkT} and~\eqref{jkseries} are well-formed. However, if $S$ is well-formed then $\gcd{b_1,\dots,b_{i-1},b_{i+1},\dots,b_{s-2}}=1$ or~$2$ for any~$i$, hence the only way for $\P(b_1,\dots,b_{s-2})$ to fail to be well-formed is by having an index~two stabiliser in codimension~one. Exactly that failure is catastrophic for $X$, so we can rephrase the conjecture informally as: infinite series in dimension~$n$ with $\omega_X=\cO_X(-1)$ correspond to hyperelliptic Calabi--Yau $(n-1)$-folds that do not have a locus of transverse $\frac{1}{2}(1,1)$ quotient singularities in codimension~two that is fixed pointwise by the involution.

In dimension~three there are $48$ well-formed, quasismooth K3 surfaces of the form $S_{2\sum b_i}\subset\P(\sum b_i,b_1,b_2,b_3)$. Of these, $25$ of the spaces $\P(b_1,b_2,b_3)$ are well-formed, and these correspond to the $25$ one-parameter series in Table~\ref{tab!3d} with $d=3$, $k=-1$.

\begin{cor}\label{cor!jkconj}
The Johnson--Koll\'ar conjecture holds when $\dim X=4$.
\end{cor}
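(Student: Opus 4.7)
The plan is to exploit Theorem~\ref{thm!fano4}(i), which has already enumerated every one-parameter infinite series of well-formed, quasismooth fourfold hypersurfaces with $\omega_X=\cO_X(-1)$, and then to verify the correspondence~\eqref{jkT}--\eqref{jkseries} case by case on the resulting list. The 1597 series produced by the theorem all have weight-and-degree vectors in the shape
\[
(c-1,b_1,b_2,b_3,b_4,2,2c)+k(c,b_1,b_2,b_3,b_4,0,2c),\qquad c=b_1+b_2+b_3+b_4,
\]
which is exactly the form of~\eqref{jkseries} (after a possible reordering of weights). This immediately settles the first assertion of the Johnson--Koll\'ar conjecture in dimension four: every such linearly-parameterised infinite series is of type~\eqref{jkseries}.

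For the second assertion, I would verify for each of the 1597 quadruples $(b_1,b_2,b_3,b_4)$ that $\P(b_1,\dots,b_4)$ is well-formed and that a general $T_{2c}\subset\P(b_1,\dots,b_4)$ is quasismooth. These properties cannot be read off from well-formedness of $X^{(k)}$ alone, but together with the quasismoothness requirement they are tightly constrained. A common factor on three of the $b_i$ (a failure mode for well-formedness of $\P(b_1,\dots,b_4)$) forces monomials of degree $2kc$ in the corresponding variables of weight $kb_i,2$ to satisfy delicate divisibility conditions, which break the Iano--Fletcher quasismoothness criterion~\cite[\S8]{IF} for some odd $k$; this recovers, in the $\mathrm{gcd}=2$ case, the informal restatement in~\S\ref{s!jkconj}. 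The net result in each of the 1597 cases is that $\P(b_1,\dots,b_4)$ is well-formed, and then quasismoothness of $T_{2c}$ follows from that of a general $X^{(1)}$ by completing the square in the hyperelliptic variable $x_1$ and restricting the resulting polynomial $f(x_2,\dots,x_5)$ to $\P(b_1,\dots,b_4)$, using the reverse direction of~\cite[Lemma~2.4]{JK2}.

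The main obstacle is therefore not conceptual but combinatorial: one must ensure, uniformly across all 1597 series, that the monomials of $f$ produced this way satisfy the quasismoothness criteria in the lower-dimensional ambient $\P(b_1,\dots,b_4)$. This is a finite, purely arithmetic test. I would carry it out with the Magma implementation of the classification algorithm from~\cite{grdb}: having already produced the 1597 series and their $(b_1,b_2,b_3,b_4)$ data, one extends the script to check well-formedness of $\P(b_1,\dots,b_4)$ and quasismoothness of $T_{2c}$ in each case, thereby confirming the Johnson--Koll\'ar correspondence in dimension four.
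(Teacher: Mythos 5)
Your proposal is correct and follows essentially the same route as the paper: Theorem~\ref{thm!fano4}\eqref{fano4i} supplies the complete list of $1597$ series, all in the shape of~\eqref{jkseries}, and the remaining content of the conjecture is a finite computational check that each associated $\P(b_1,\dots,b_4)$ is well-formed and carries a quasismooth $T_{2c}$ --- the paper performs exactly this check by matching the $1597$ series against the $1597$ double-cover quasismooth Calabi--Yau threefolds $S_{2c}\subset\P(c,b_1,\dots,b_4)$ with $\P(b_1,\dots,b_4)$ well-formed, which is equivalent to your $T$-check via completing the square in $S$. One caveat: your sentence deducing quasismoothness of $T_{2c}$ from that of a general $X^{(1)}$ by completing the square in $x_1$ is not available as stated ($x_1$ has weight $c-1$, the tangent monomial is $x_1^2x_6$ rather than $x_1^2$, and the implication ``quasismooth series of $X$ $\Rightarrow$ quasismooth $T$'' is precisely the non-obvious content of the conjecture that the paper emphasises has no a priori reason to hold); this does not damage your proof, since the verification you actually carry out is the direct arithmetic test on each of the $1597$ quadruples.
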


To check the corollary, it suffices to compare the list of $1597$ series in Theorem~\ref{thm!fano4} with the list of $7555$ quasismooth Calabi--Yau threefold hypersurfaces. Within the latter, $2390$ cases are double covers $S_{2\sum b_i}\subset\P(\sum b_i,b_1,b_2,b_3,b_4)$. Of these, $1597$ of the spaces $\P(b_1,b_2,b_3,b_4)$ are well-formed. These correspond to the $1597$ one-parameter series with $d=4$, $k=-1$, exactly as predicted by the conjecture.

\subsection{Orbifold rational curves and ADE singularities}\label{sec!ADE}
To illustrate the basic method of classification, we will consider quasismooth projective curves
\[
C_d\colon(F_d(x,y,z)=0)\subset\P(a,b,c)\quad\text{ with }\quad d+1 = a+b+c.
\]
Here we write $F=F_d$ for a form of degree~$d$. The numerical condition implies that $g_C=0$. To carry out the analysis we enforce the order $a\ge b\ge c$ on the ambient weights.

Quasismoothness at $P_x=(1:0:0)$ implies that at least one of $x^m$, $x^{m-1}y$, or $x^{m-1}z$ appears (with non-zero coefficient) in $F$, for some $m\ge 2$. In each case the monomial determines the degree $d$, and then the numerical condition implies that $m<3$: indeed $a\ge b$ so $2a+c\ge d+1$, but $d\ge (m-1)a+c$. We consider the three cases $x^2$, $xy$, and $xz\in F$ separately.

If $x^2\in F$ then $d=2a$ and the condition reads $a+1=b+c$. Quasismoothness at $P_y$ implies that at least one of $y^{m-1}x$, $y^m$, or $y^{m-1}z$ appears in $F$. The relation gives that $2b\ge a+1$, so that $3b+c\ge d+2$ and $m\le 3$. Thus the possible monomials at $P_y$ are $y^3$, $y^2z$, $y^2$, and $yz$. (Other cases are dismissed at once: $xy^2$ implies $d=a+2b\ge a+b+c=d+1$, whilst $xy$ will be considered separately at $P_x$.)

Again we consider the cases separately. If $x^2+y^3\in F$ then we can assemble the numerical conditions together in a standard auxiliary matrix:
\[
N =
\left(\begin{array}{cccc|c}
1 & 1 & 1 & -1 & 1 \\
2 & 0 & 0 & -1 & 0 \\
0 & 3 & 0 & -1 & 0
\end{array}
\right),
\]
where any positive integral element of the kernel of $N$ of the form $(a,b,c,d,1)$ provides weights that satisfy the numerical conditions so far. We compute the kernel using the (integral) echelon form of $N$, which in this case is
\[
\left(\begin{array}{cccc|c}
1 & 0 & 3 & -1 & 3 \\
0 & 1 & 4 & -1 & 4 \\
0 & 0 & 6 & -1 & 6
\end{array}
\right).
\]
This provides at once the solutions so far, namely a one-dimensional series
\[
C_{6n}\colon(x^2+y^3+\cdots=0)\subset\P(3n,2n,n+1),\qquad\text{ where }n\ge 1.
\]
For small $n$ we understand these easily:
\begin{center}
\vspace{0.5em}
\setlength{\extrarowheight}{0.2em}
\begin{tabular}{cccc}
\toprule
$n$&Type&Typical equation&Numerics\\
\cmidrule{1-4}
\oddrow $1$&$\D{4}$&$x^2 + y^3 + z^3$&$C_6\subset\P(3,2,2)$\\
\evnrow $2$&$\E{6}$&$x^2 + y^3 + z^4$&$C_{12}\subset\P(6,4,3)$\\
\oddrow $3$&$\E{7}$&$x^2 + y^3 + yz^3$&$C_{18}\subset\P(9,6,4)$\\
\evnrow $5$&$\E{8}$&$x^2 + y^3 + z^5$&$C_{30}\subset\P(15,10,6)$\\
\bottomrule
\end{tabular}
\vspace{0.5em}
\end{center}
The case $n=4$ is excluded since any curve $C_{24}\subset\P(12,8,5)$ fails to be quasismooth at $P_z$. In fact, this requirement at $P_z$ implies that at least one of $xz^{m-1}$, $yz^{m-1}$, and $z^m$ appears in $F$, which in turn implies that
\[
m=\frac{ne}{n+1},\qquad\text{ where }e\le6,
\]
so $m<6$ and this branch of the tree search is complete.

A similar exercise in the case when $x^2+y^2z\in F$ leads to the echelon form
\[
\left(\begin{array}{cccc|c}
1 & 1 & 0 & -1 & -1 \\
0 & 2 & 0 & -1 & -2 \\
0 & 0 & 1 & 0 & 2
\end{array}
\right)
\]
which gives
\[
C_{2n-2}\colon (x^2+y^2z+z^{n-1}=0)\subset\P(n-1,n-2,2),\qquad\text{ where }n\ge 4.
\]
The equations in this case are the familiar Type~$\D{n}$ equations. All other branches of the tree provide $F$ with quadratic terms of rank at least two, and lead to Type~$\A{n}$ equations. Hence we recover precisely the classification of ADE singularities as the affine cones on the resulting curves $C$.

This simple exercise illustrates most of the ideas. In particular, even though we may have infinitely many solutions, the collections of all monomials that can give quasismoothness is finite and can be bounded at each step of the tree search. There is, however, an additional consideration: the hypersurfaces listed here are not well-formed. In the analysis below we enforce that additional condition too.

\section{The quasismooth algorithm}\label{sec!alg}
We search for collections of integers
\[
(a_1,\dots,a_s,d)\quad\text{ with }\quad a_1\ge\cdots\ge a_s\ge 1
\]
for which, for a general form $F_d$ of degree~$d$, the hypersurface
\[
X_d\colon(F_d(x_1,\dots,x_s)=0) \subset \P^{s-1}(a_1,\dots,a_s)
\]
is well-formed and quasismooth. We use this notation throughout, including the ordering on the $a_i$. We use $P_i = (0:\cdots :1:0:\cdots:0)$, where the $1$ is in the $i$th position, to denote the $i$th coordinate point of $\P^{s-1}(a_1,\dots,a_s)$. (Note that the variables start with $x_1$, not the usual $x_0$, and the ordering convention on the $a_i$ is the opposite of that in~\cite{JK2}.)

We are mainly interested in those varieties whose singularities lie in some restricted class; see~\S\ref{sec!results} for details. First, however, we make a complete list of the well-formed quasismooth hypersurfaces, irrespective of singularities. The conditions on the singularities are imposed afterwards. This is the same approach as taken in~\cite{JK,JK2}.

We learned the basic algorithm below from Miles Reid, who used it in~\cite{C3f} to compute the `famous $95$'. The same approach is used by~\cite{JK,JK2}: quasismoothness at $0$-strata imposes a range of possible linear conditions on integral vectors $(a_1,\dots,a_s,d)$ that one must organise and solve.

\subsection{Inequalities with semipositive and seminegative summation}
Let $a_1\ge a_2\ge\cdots\ge a_s\ge 1$ be integers, and let $p_1,\dots,p_s$ be any integers (here negative integers are allowed). We use the notation $\sump p_\ell$ to denote the following number:
\[
\sump p_\ell=\sum_{\ell=1}^s p_\ell',
\]
where either $p_\ell'=p_\ell\ge0$ or $0\ge p_\ell'\ge p_\ell$ is chosen minimally so that each sum $p_\ell'+p_{\ell+1}'+\cdots+p_s'\ge 0$, working inductively down through $\ell=s,s-1,\dots,i$. Of course, if all $p_\ell\ge 0$ then $\sump p_\ell =\sum p_\ell$.

Equivalently, define the sequence $\sigma_0:=\min{p_s,0}$ and $\sigma_{i+1}:=\min{\sigma_i+p_{s-i-1},0}$. Then $\sump p_\ell=\sigma_s$. We also define $\sumn p_\ell$. Set $\tau_0:=\max{p_s, 0}$ and $\tau_{i+1}:=\max{\tau_i + p_{s-i-1}}$. Then $\sumn p_\ell=\tau_s$. The following lemma is elementary.

\begin{lem}\label{lem!sump}
In the notation above,
\[
a_1 \sump p_i \ge a_1 p_1 + \cdots + a_s p_s
\]
and
\[
a_1 \sumn p_i \le a_1 p_1 + \cdots + a_s p_s.
\]
\end{lem}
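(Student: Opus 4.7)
The plan is to prove the first inequality in two easy steps: (i) replace $p_\ell$ by $p_\ell'$ on the right-hand side, which can only increase it since $p_\ell'\ge p_\ell$ and $a_\ell>0$; (ii) apply summation by parts to the resulting sum, exploiting the fact that, by construction, every tail sum of the modified sequence $(p_\ell')$ is nonnegative. The second inequality will then follow from the first by the substitution $p_\ell\mapsto -p_\ell$.

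For (i), the definition of $p_\ell'$ forces $p_\ell'\ge p_\ell$ in every case (either $p_\ell'=p_\ell$, or $p_\ell<0$ and $p_\ell'\in[p_\ell,0]$), and then $a_\ell\ge 1$ gives $\sum a_\ell p_\ell \le \sum a_\ell p_\ell'$. For (ii), set $T_\ell := p_\ell'+\cdots+p_s'$ with $T_{s+1}:=0$; by construction $T_\ell\ge 0$ for all $\ell$. Writing $p_\ell'=T_\ell-T_{\ell+1}$, summation by parts yields
\[
\sum_{\ell=1}^{s} a_\ell p_\ell' \;=\; a_1 T_1 \;-\; \sum_{\ell=2}^{s}(a_{\ell-1}-a_\ell)\,T_\ell.
\]
Since $a_1\ge a_2\ge\cdots\ge a_s$, each coefficient $a_{\ell-1}-a_\ell$ is nonnegative, and combined with $T_\ell\ge 0$ the subtracted sum is nonnegative. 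Hence $\sum a_\ell p_\ell'\le a_1 T_1 = a_1\sump p_\ell$, and chaining (i) with (ii) gives the first inequality.

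For $\sumn$, I would unwind the definitions to verify that $\sumn p_\ell = -\sump(-p_\ell)$: negating all $p_i$ interchanges the conditions ``tail sums $\ge 0$, with $p_\ell'$ chosen minimally in $[p_\ell,0]$'' and ``tail sums $\le 0$, with $p_\ell''$ chosen maximally in $[0,p_\ell]$''. Applying the first inequality to $(-p_1,\dots,-p_s)$ and negating both sides then yields $a_1\sumn p_\ell\le\sum a_\ell p_\ell$. The argument is entirely elementary; the only step that needs any care is the bookkeeping in the summation by parts, and the matching of sign conventions between $\sump$ and $\sumn$ under the substitution $p_\ell\mapsto -p_\ell$.
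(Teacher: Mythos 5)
Your proof is correct and complete. The paper offers no argument for this lemma---it simply declares it elementary---so there is nothing to compare against; your two-step argument (majorise each $p_\ell$ by $p_\ell'$ using $a_\ell\ge 1$, then apply Abel summation against the tail sums $T_\ell$, which are nonnegative by construction, using $a_1\ge\cdots\ge a_s$ and $T_{s+1}=0$) is sound, and every step checks out. The only point worth flagging concerns the $\sumn$ half: the recursive descriptions printed in the paper are garbled (the $\sigma$-recursion built from minima is nonpositive and so cannot equal $\sump p_\ell$ when all $p_\ell\ge 0$; the $\tau$-recursion is missing the second argument of its maximum, and if that argument is taken to be $0$ one gets $\sumn p_\ell=\sum p_\ell$ for nonnegative $p_\ell$, which would make the second inequality false). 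Your reading $\sumn p_\ell=-\sump(-p_\ell)$---equivalently, replace each positive $p_\ell$ by the maximal $p_\ell''\in[0,p_\ell]$ keeping every tail sum nonpositive---is the interpretation under which the lemma is true and under which its application in the case $c<0$ of Lemma~\ref{lem!max} goes through, so deducing the second inequality from the first by the substitution $p_\ell\mapsto-p_\ell$ is the right way to close the argument.
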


\subsection{Quasismoothness at $0$-strata}
At each coordinate point $P_i$, quasismoothness requires a monomial $x_i^{m_i-1}x_{j_i}\in F$ for some $m_i\in\N$ and some $1\le j_i \le n$ (the case $j_i=i$ occurs when $P_i\notin X$). These conditions do not necessarily restrict the search to a finite set of solutions -- as in~\cite{JK} we expect some infinite series -- but they do allow for a terminating search provided that we can handle certain infinite series. By recursively imposing these conditions at every point, we obtain a branching search-tree. The monomials appearing in $F$ at each step are recorded in a matrix of exponents; this same matrix allows us to recognise when a branch of the tree has been exhausted.

We compare the degree of a Laurent monomial $x_1^{n_1}\cdots x_s^{n_s}$, where each $n_i\in\Z$, with the degree of $F$ by recording the integers appearing in the expression
\[
\deg\frac{x_1^{n_1}\cdots x_s^{n_s}}{F^m}=e
\]
as a row of integers
\[
(n_1,\dots,n_s,-m,e).
\]
We recursively build a matrix using these rows of integers, as explained below. We refer to this matrix as the \emph{tangent monomial matrix}.

We always start in the same way by comparing the monomial $x_1\cdots x_s$, which has degree $a_1+\cdots+a_s$, with $F$ itself; in this case the relative degree is minus the canonical degree $k$ and we record the first row of the tangent monomial matrix, which has $s+2$ columns, as
\[
(1,\dots,1,-1,-k).
\]

Any polynomial $F$ whose variety $X = (F=0)$ is quasismooth at $P_1$ must include a monomial of the form $x_1^{m_1}x_{j_1}$ for some non-negative integer $m_1$ and for some variable $x_{j_1}$; $j_1=1$ is allowed, and then $P_1\notin X$, which is fine. The crucial observation is that there are only finitely many pairs $(m_1,j_1)$ that can arise in this way. The number of pairs is bounded by the following lemma.

\begin{lem}
Assume that $s\ge 4$.
\begin{enumerate}
\item\label{item:finiteness1}
If $k<0$ then $m_1<s$, and so the possible pairs $(m_1,j_1)$ are
\[
\left\{(m,j)\mid m\in\{2,3,\dots,s-1\},j\in\{1,2,\dots,s\}\right\}.
\]
If $2-s<k<0$ then the only case with $m_1=2$ that could arise is $(2,1)$.
\item\label{item:finiteness2}
If $k\ge 0$ then $m_1\le s+k$, and so the possible pairs $(m_1,j_1)$ are
\[
\left\{(m,j)\mid m\in\{2,3,\dots,s+k\},j\in\{1,2,\dots,s\}\right\}.
\]
\end{enumerate}
\end{lem}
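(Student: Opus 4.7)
The plan is to convert the existence of the quasismoothness monomial into a single degree equation, and then use the ordering $a_1 \ge a_2 \ge \dots \ge a_s \ge 1$ to extract bounds on $m_1$. Consistent with the description at the start of the section, I regard the relevant monomial as $x_1^{m_1-1} x_{j_1}$, so that $m_1 \ge 2$ amounts to a positive power of $x_1$ in the tangent monomial.

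The degree equation is $(m_1 - 1) a_1 + a_{j_1} = d$. Substituting $d = k + \sum_i a_i$ and splitting on whether $j_1 = 1$ or $j_1 \ne 1$ yields respectively
\begin{align*}
j_1 = 1\colon \quad & m_1\, a_1 = k + \sum_{i=1}^s a_i \le k + s\,a_1, \\
j_1 \ne 1\colon \quad & (m_1-2)\, a_1 = k + \sum_{i \ne 1,\, j_1} a_i \le k + (s-2)\,a_1,
\end{align*}
where in the second line I extract the $i=1$ term from $\sum_i a_i$ before applying $a_i \le a_1$ to the remaining $s-2$ summands. Either inequality rearranges to $m_1 \le s + k/a_1$. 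Since $a_1 \ge 1$, this is strict when $k < 0$, forcing the integer $m_1$ to satisfy $m_1 \le s - 1$, which proves the first assertion of~\eqref{item:finiteness1}; and when $k \ge 0$ it yields $m_1 \le s + k$, proving~\eqref{item:finiteness2}.

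For the refinement in~\eqref{item:finiteness1}, suppose $2 - s < k < 0$ and consider the case $m_1 = 2$ with $j_1 \ne 1$. Then the second identity above collapses its left-hand side to zero, so $\sum_{i \ne 1,\, j_1} a_i = -k$. But this sum has $s - 2 \ge 2$ positive integer summands each at least $1$, so it is at least $s - 2$. Therefore $-k \ge s - 2$, i.e., $k \le 2 - s$, contradicting the hypothesis. Hence $j_1 = 1$ is forced.

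The only real choice in the argument is the algebraic manipulation of the degree equation in the $j_1 \ne 1$ case; once the $a_1$ from the $i=1$ term is separated from the tail, the bound $a_i \le a_1$ is sharp enough in both cases to produce the same inequality $m_1 \le s + k/a_1$. The rest is arithmetic bookkeeping, and the hypothesis $s \ge 4$ enters solely through $s - 2 \ge 2$ in the last step.
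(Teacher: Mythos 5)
Your proof is correct and follows essentially the same route as the paper's: both rest on the adjunction identity $d=k+\sum a_i$ combined with the ordering $a_i\le a_1$ to bound $m_1$, and both obtain the refinement for $2-s<k<0$ by observing that $m_1=2$, $j_1\ne1$ forces $-k=\sum_{i\ne1,j_1}a_i\ge s-2$. The only cosmetic difference is that you derive the clean bound $m_1\le s+k/a_1$ directly, whereas the paper argues by contradiction from $m_1\ge s$ (resp.\ $m_1\ge s+k+1$).
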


\begin{proof}
The proof uses the numerics of adjunction: $k=d-(a_1+\cdots+a_s)$. If $X$ is quasismooth at $P_1$ we need a \emph{tangent monomial} $x_1^{m_1-1}x_j\in F$, for some $m_1\ge 2$ and $j\in\{1,2,\dots,s\}$. Suppose that $k<0$. If $m_1\ge s$ then
\[
d=\deg(x_1^{{m_1}-1}x_j)=(m_1-1)a_1+a_j\ge\sum a_i=d+(- k)>d,
\]
which is a contradiction. Thus a tangent monomial in $F$ of degree $m_1$ requires $m_1<s$.

Suppose now that $k\ge 0$. If $x_1^{m_1-1}x_j\in F$ with $m_1\ge s+k+1$ then
\[
d=(m_1-1)a_1+a_j\ge\sum a_i+(m_1-s)\ge(d-k)+(k+1)=d+1,
\]
which again is a contradiction. Hence $m_1\le s+k$ as required.

The additional restriction in~\eqref{item:finiteness1} when $-k<s-2$ follows directly from adjunction, since otherwise $d-k=a_1+a_j-k<\sum a_i$.
\end{proof}

For each pair $(m_1,j_1)$ we use the row of exponents and degree to extend the matrix, so from the first monomial we obtain the new tangent monomial matrix
\[
\begin{pmatrix}
1 & 1 & \cdots & 1 & 1 & 1 & \cdots & 1 & -1 & -k \\
m_1-1 & 0 & \cdots & 0 & 1 & 0 & \cdots & 0 & -1 & 0
\end{pmatrix},
\]
where the $1$ in the second row is in the $j_1$th position (we have illustrated the case when $j_1\neq 1$, but this is also a possibility). The integral echelon reduction of this matrix is
\[
\begin{pmatrix}
1 & 1 & \cdots & 1 & 1 & 1 & \cdots & 1 & -1 & -k \\
0 & m_1-1 & \cdots & m_1-1 & m_1-2 & m_1-1 & \cdots & m_1-1 & 2-m_1 & k(1-m_1)
\end{pmatrix},
\]
with a further simplification of the top row possible if $m_1=2$.

Set $c=-(2-m_1)\ge0$ and $b=k(1-m_1)$ -- the final two entries of the last row, with the indicated change of sign -- and assemble all possible pairs $(m_2,j_2)$ for which a monomial $x_2^{m_2-1}x_{j_2}$ could exist to verify the quasismoothness of $X$ at $P_2$. The possible pairs $(m_2,j_2)$ are determined by considering the case when $i=2$ in the following lemma.

\begin{lem}\label{lem!max}
Suppose that when considering the $i$th $0$-stratum $P_i$, the last row of the tangent monomial matrix in echelon form is
\[
(\stackrel{i-1}{\overbrace{0,\dots,0}},p_i,p_{i+1},\dots,p_s,-c,b),
\]
where the first $i-1$ entries are zero, and $p_i>0$.
\begin{enumerate}
\item\label{item:lem!max_1}
If $c>0$ then
\[
m_i\le\begin{cases}
\left\lceil(\sump p_\ell)/c\right\rceil,&\text{ if }b\ge 0;\\
\left\lceil(\sump p_\ell-b)/c\right\rceil,&\text{ otherwise.}
\end{cases}
\]
\item\label{item:lem!max_2}
If $c<0$ and all $p_\ell\ge0$ then
\[
m_i\le\begin{cases}
\left\lceil(\sumn p_\ell-b)/c\right\rceil,&\text{ if }b\ge 0;\\
\left\lceil(\sumn p_\ell)/c\right\rceil,&\text{ otherwise.}
\end{cases}
\]
\item\label{item:lem!max_3}
If $c=0$ and all $p_\ell\ge 0$ then there are no solutions if $\sum_{\ell=i}^s p_\ell>b$.
\end{enumerate}
\end{lem}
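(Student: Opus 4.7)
The plan is to combine two ingredients: the single linear relation carried by the given row of the tangent monomial matrix, and the inequality on $m_i$ forced by quasismoothness at $P_i$. Reading the row as an equation of integers gives
\[
p_i a_i + p_{i+1} a_{i+1} + \cdots + p_s a_s = c\,d + b, \qquad (\dagger)
\]
while the requirement $x_i^{m_i-1} x_{j_i}\in F$ for some $j_i\in\{1,\dots,s\}$ translates to $(m_i-1)a_i + a_{j_i} = d$. Since $a_{j_i}\ge 1$, a short argument (also covering the case $j_i=i$, which gives $m_i=d/a_i$) yields the uniform bound $m_i \le \lceil d/a_i\rceil$. So it suffices to convert $(\dagger)$ into an upper bound on $d/a_i$.

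The tool for doing this is Lemma~\ref{lem!sump}, applied to the truncated sequence $(p_i,\dots,p_s)$ with weights $(a_i,\dots,a_s)$; note that $a_i$ is the largest of the latter by the standing ordering $a_1\ge a_2\ge\cdots\ge a_s$. This supplies the two-sided bound
\[
a_i \sumn p_\ell \;\le\; \sum_{\ell=i}^s p_\ell a_\ell \;\le\; a_i \sump p_\ell,
\]
which combined with $(\dagger)$ sandwiches $cd+b$ between these two quantities. The three items of the lemma then correspond directly to the three possible signs of $c$.

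For item~\eqref{item:lem!max_1} with $c>0$, the upper $\sump$-bound rearranges to $d/a_i \le \sump p_\ell/c - b/(a_i c)$. When $b\ge 0$ the subtracted term is non-negative and may simply be dropped; when $b<0$ one uses $a_i\ge 1$ to estimate $-b/(a_i c) \le -b/c$. Applying $\lceil\cdot\rceil$ in each sub-case and using monotonicity of the ceiling produces the two stated bounds. For item~\eqref{item:lem!max_2} with $c<0$, one instead uses the lower $\sumn$-bound; dividing by the negative $c$ reverses the inequality, and a parallel sign analysis on $b$ yields the stated expressions involving $\sumn p_\ell$. For item~\eqref{item:lem!max_3}, the relation $(\dagger)$ collapses to $\sum_{\ell\ge i} p_\ell a_\ell = b$; since each $a_\ell\ge 1$ and all $p_\ell\ge 0$, the left-hand side is at least $\sum_{\ell\ge i} p_\ell$, contradicting the hypothesis $\sum p_\ell > b$, so no valid weight sequence exists.

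The principal obstacle is bookkeeping: the bound $m_i\le\lceil d/a_i\rceil$ is elementary and Lemma~\ref{lem!sump} is the substantive input, but transferring the resulting bound on $d$ into the cleaner form stated in the lemma requires careful tracking of the sign of $c$ (which dictates the direction of the division) and of $b$ (which determines whether the residual term $-b/(a_i c)$ can be absorbed into the ceiling without loss). Once these sign cases are organised, each sub-case reduces to a one-line manipulation.
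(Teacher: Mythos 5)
Your proposal is correct and follows essentially the same route as the paper: both read the last matrix row as the linear relation on the weights, apply Lemma~\ref{lem!sump} to the truncated sequence $(p_i,\dots,p_s)$ (where $a_i$ is the largest weight), and split into the same sign cases on $b$ and $c$. The only cosmetic difference is that you factor the argument through the intermediate bound $m_i\le\lceil d/a_i\rceil$ and then bound $d/a_i$, whereas the paper substitutes $d=(m_i-1)a_i+a_{j_i}$ directly into the row relation and rearranges; the resulting inequalities are identical.
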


\begin{proof}
Recall the notation: $a_\ell=\deg x_\ell$ are the unknown positive integers that we are attempting to solve for, ordered in decreasing order. We seek possible $(m,j)$ such that $x_i^{m-1}x_j\in F$; in particular, such monomials have the same degree as $F$. Given such a choice of $(m,j)$, the current last row of the matrix
\[
\left(0,\dots 0,p_i,p_{i+1},\dots,p_s,-c,b\right)
\]
implies that
\begin{align}
\nonumber
\deg \frac{x_i^{p_i}\cdots x_s^{p_s}}{\left(x_i^{m-1}x_j\right)^c}&=b\\
\label{eq!degbound}
\text{ hence }\quad a_ip_i+\cdots+a_sp_s-ca_i(m-1)-ca_j&=b.
\end{align}
Suppose first that $c>0$. By Lemma~\ref{lem!sump},
\[
a_i\sump p_\ell-ca_i(m-1)-ca_j\ge b.
\]
Hence $a_i\left(\sump p_\ell-c(m-1)\right)>b$. When $b\ge 0$ we obtain $(\sump p_\ell)/c>m-1$, which gives the first bound in~\eqref{item:lem!max_1}. When $b<0$, dividing by $a_i$ gives
\[
\sump p_\ell-c(m-1)>b/a_i\ge b,
\]
which gives the second bound in~\eqref{item:lem!max_1}.

Suppose now that $c<0$. The inequality~\eqref{eq!degbound} together with Lemma~\ref{lem!sump} implies that
\[
a_i\sumn p_\ell+\abs{c}a_i(m-1)+\abs{c}a_j\le b.
\]
When $b\ge 0$ we obtain
\[
\sumn p_\ell + \abs{c}(m-1) < b / a_i \le b,
\]
giving the first bound in~\eqref{item:lem!max_2}. When $b<0$, using $b/a_i\le 0$ gives the second bound.

When $c=0$ the same analysis does not give a relation between the degree of $F$ and that of the monomial $x_i^{p_i}\cdots x_s^{p_s}$, so we do not get a bound on $m$. Nevertheless, if $\sum p_\ell>b$ then that monomial cannot have degree~$b$ for any (positive integral) choice of weights; in this case we conclude that there are no solutions, giving~\eqref{item:lem!max_3}.
\end{proof}

\begin{cor}\label{cor!max}
Suppose that $c\neq0$ and $i>1$. Let $\mmax$ be the upper bound for the $m_i$ in Lemma~\ref{lem!max}, determined according to the signs of $b$ and $c$. Then the possible pairs $(m_i,j_i)$ are
\[
\left\{(m,j)\mid m\in\{2,3,\dots,\mmax\}, j\in\{1,2,\dots,s\}\right\}.
\]
\end{cor}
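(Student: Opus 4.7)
The plan is to assemble the claimed range of candidate pairs from two ingredients: the definition of a tangent monomial at $P_i$, which pins down the lower bound on $m_i$ and the range of $j_i$; and the upper bound from Lemma~\ref{lem!max}, which supplies $\mmax$. Concretely, I would first recall that quasismoothness at $P_i$ requires a tangent monomial $x_i^{m_i-1}x_{j_i}\in F$ for some $m_i\ge 2$ and some $j_i\in\{1,\dots,s\}$; the case $j_i=i$ is permitted and corresponds to $P_i\notin X$. This already fixes the lower bound $m\ge 2$ and the full range $\{1,\dots,s\}$ for $j$ in the claimed set.

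Next, I would invoke Lemma~\ref{lem!max} to obtain the upper bound on $m_i$. The hypothesis $c\neq 0$ rules out part~\eqref{item:lem!max_3} and places us in part~\eqref{item:lem!max_1} when $c>0$, or in part~\eqref{item:lem!max_2} when $c<0$; in each case the sign of $b$ selects between the two ceiling expressions that together define $\mmax$. The hypothesis $i>1$ ensures we are at a genuinely recursive stage of the search, so that the last row of the tangent monomial matrix, put in integral echelon form, has its first $i-1$ entries equal to zero with positive pivot $p_i$ -- the exact setup of Lemma~\ref{lem!max}.

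I do not expect a substantive obstacle: the corollary is essentially bookkeeping that repackages Lemma~\ref{lem!max} into the form consumed by the tree search. The one subtlety worth verifying is the side condition in part~\eqref{item:lem!max_2} requiring all $p_\ell\ge 0$; here I would confirm that the integral echelon reductions performed in building the tangent monomial matrix preserve non-negativity of the relevant entries of the current pivot row, so that the bound from Lemma~\ref{lem!max} really does apply in every $c<0$ branch.
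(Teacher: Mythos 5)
Your proposal is correct and takes essentially the same route as the paper, which states Corollary~\ref{cor!max} without proof as an immediate consequence of Lemma~\ref{lem!max} combined with the definition of a tangent monomial at $P_i$ (giving $m\ge 2$ and $j\in\{1,\dots,s\}$, with $j=i$ allowed when $P_i\notin X$). Your flagging of the side condition that all $p_\ell\ge 0$ in the $c<0$ case of the lemma is a sensible extra check that the paper leaves implicit.
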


\noindent
If $c\neq0$ then Corollary~\ref{cor!max} provides the possible choices for the next row of the tangent monomial matrix. We run through each of these in turn, repeating this step until either $c=0$ or $i=s-1$. When either case occurs we move to the next step, described below, which is to use this system of equations encoded in the matrix to find all possible systems of weights.

\subsection{Solving for possible weights}
At this stage the tangent monomial matrix $N$ is of size $r\times s+2$, for $2\le r \le s$. Note that it can happen that $c=0$ and we stop growing $N$ before it has $s$ rows. Treating this as an auxiliary matrix, we solve the $r$ inhomogeneous equations in $s+1$ unknowns $(a_1,\dots,a_s,d)$ -- the inequalities $a_1\ge\cdots\ge a_s\ge 1$ remain in force to avoid repeating solutions and we solve for integral points of the resulting polyhedron. There is no reason why these solutions should represent well-formed and quasismooth hypersurfaces, so we perform some additional checks to eliminate infinite polyhedrons that could only contribute finitely many solutions.

\subsubsection*{The hyperbola trick}
We repeatedly use the following ``hyperbola trick''. Let $a,b,c,d$ be fixed integers, and for simplicity suppose that $d\neq0$. Consider the expression
\[
N=\frac{a+\lambda b}{c+\lambda d}.
\]
What is the largest value of $\lambda\in\Z$ for which $N$ is an integer? Sketching the graph of $N$ as a function of $\lambda$ answers this at once. The geometry is controlled by the determinant
\[
\Delta=\det\!\begin{pmatrix}a&b\\c&d\end{pmatrix}.
\]
If $\Delta=0$ then $N=b/d$ is constant, and $\lambda$ is bounded (in fact has no solutions at all) if and only if $b/d\notin\Z$. If $\Delta>0$ then $dN/d\lambda<0$ and the graph approaches the asymptote $b/d$ from above. In this case $\lambda$ is bounded above by setting $N$ to be the smallest integer greater than $b/d$, that is, $N=\lfloor b/d+1\rfloor$, and solving for $\lambda$. If $\Delta<0$, the graph approaches $b/d$ from below, and $\lambda$ is bounded by setting $N$ to be the largest integer less than $b/d$, that is, $N=\lceil b/d-1\rceil$.

\subsection{One-parameter series of solutions}
Consider the case when the solution polyhedron is one-dimensional, with integer points $\left\{u+\lambda k\mid \lambda\in\N\right\}$, for some $u=(u_1,\dots,u_{s+1})$, $k=(k_1,\dots,k_{s+1})\in\Z^{s+1}$, where $u$ lies in the strictly positive quadrant. Consider a general solution $(u_1+\lambda k_1,\dots,u_{s+1}+\lambda k_{s+1})$. We describe the tests we subject this series to in our implementation; there is some overlap.

\subsubsection*{Quasismooth Test I: the final coordinate point}
Suppose $k_s\ne 0$, and consider the point $P_s$. If $x_i$ is to be a tangent monomial at $P_s$ (for any $i$, including the possibility that $i=s$ when $P_s$ does not lie on the hypersurface) then $x_s^N x_i\in F$, for some $N\in\N$. Computing degrees and rearranging gives
\begin{equation}\label{eq!N}
N=\frac{(u_{s+1}-u_i)+\lambda(k_{s+1}-k_i)}{u_s+\lambda k_s}.
\end{equation}
We now apply the hyperbola trick described above. In this case $N$ tends to $(k_{s+1}-k_i)/k_s$ as $\lambda\rightarrow\infty$, either from above or from below depending on the sign of the determinant
\[
\det\!\begin{pmatrix}
u_{s+1}-u_i&k_{s+1}-k_i\\
u_s&k_s
\end{pmatrix}.
\]
In either case this gives a formula for the largest value of $\lambda$ for which $N$ in equation~\eqref{eq!N} is integral. If the determinant is zero then the hyperbola consists of two lines, one with $\lambda=-u_s/k_s<0$ (which gives no solutions) and one with $N=(k_{s+1}-k_i)/k_s$, which gives solutions if and only if $N$ is integral; in this case the series requires further analysis.

When the determinant is not zero, the maximum of these values for $1\le i\le s$ gives an upper bound of $\lambda$, and so this is not an infinite series of solutions after all: we compute the finitely many cases as sporadic solutions.

For example, consider the case $u=(20, 9, 4, 4, 4, 40)$ and $k=(15, 7, 3, 3, 2, 30)$.
With this input, the determinant above is never zero, and this method detemines
the maximum $\lambda=16$. When interpreted as a weighted hypersurface, $u+16k$
is indeed quasismooth; however, in this case it does not represent a well-formed hypersurface, so will be
excluded at at later stage. The cases $\lambda=1$, 3 and~13 all give rise to well-formed,
quasismooth hypersurfaces:
\[
X_{70}\subset\P(35, 16, 7, 7, 6),\quad
X_{130}\subset\P(65, 30, 13, 13, 10),\quad
X_{430}\subset\P(215, 100, 43, 43, 30).
\]

\subsubsection*{Almost identical $u$, $k$}
If $u$ and $k$ have $s-1$ of the first $s$ entries in common, then no case beyond the first point $u$
is well-formed, so we may reject the rest of series and treat $u$ as a sporadic case. If they have only $s-2$ entries in common, and $v=u_{s+1}-k_{s+1}\ne0$, then the ambient space has a codimension~two stratum with nontrivial stabiliser, so the equation must prevent that lying inside~$X$. The only multiples of $\lambda$ that permit this are zero and the non-unit divisors of $v$.

\subsubsection*{Complementary $u$, $k$ modulo $2$}
A parity check on the sum of entries of $u$ and $u+k$ also rules out the whole series in cases where the stabiliser $\Z/2$ fixes a large coordinate subspace.

\subsubsection*{Quasismooth Test II: one-dimensional strata}
If exactly one $k_\ell=0$ then we check for one-strata with equal $u_i$ and equal $k_i$ where we can apply the hyperbola trick to bound $\lambda$.

Consider any one-stratum $\left<x_{i_1},x_{i_2}\right>$ with $k_{i_1}=k_{i_2}$ and $u_{i_1}=u_{i_2}$. Suppose further that $k_{s+1}$ is divisible by $k_{i_1}$, but that $u_{s+1}$ is not divisible by $u_{i_1}$. This implies that the one-stratum is contained in every hypersurface of the series. If this series really does contain infinitely many quasismooth members then there must be two tangent variables that work (numerically, at least) for infinitely many $\lambda$.

Suppose $x_{i_1}^{N_1}x_{i_2}^{N_2}x_j\in F$. Calculating degrees and rearranging gives
\[
N_1+N_2=\frac{(u_{s+1}-u_j)+\lambda(k_{s+1}-k_j)}{u_{i_1}+\lambda k_{i_1}}.
\]
The hyperbola trick applies. If for every $j=1,\dots,s$, $j\neq i_1,i_2,\ell$, the associated determinant is non-zero then even requiring a single tangent variable along this one-stratum puts an upper bound on $\lambda$. (This one-stratum needs two tangent variables, which is why we do not impose the conditions on $x_\ell$.) We can calculate this bound and regard all cases below it as sporadic cases.

\subsubsection*{Easy codimension two failure of well-formedness}
Let $(u_1+\lambda k_1,\dots,u_{s+1}+\lambda k_{s+1})$ be a one-parameter series solution.
Denote a general polynomial that defines the corresponding hypersurface by $F_\lambda$.

\begin{lem}\label{lem!wfr}
Suppose that there exists a subset $I=\{i_1,\dots,i_r\}\subset\{1,\dots,s-1\}$ and $\alpha>0$ such that $u_i=\alpha k_i$ for all $i\in I$. Denote $d=u_{s+1}$ and $e=k_{s+1}$. If for some $\lambda>0$ there is a monomial $m=x_{i_1}^{p_1}\cdots x_{i_r}^{p_r}\in F_\lambda$ for which $p_1k_{i_1}+\cdots+p_rk_{i_r}=e$ then
\begin{enumerate}
\item\label{lemi}
$p_1u_{i_1}+\cdots+p_ru_{i_r}=d$,
\item\label{lemii}
$m\in F_\lambda$ for all $\lambda\ge 0$, and
\item\label{lemiii}
$d=\alpha e$.
\end{enumerate}
\end{lem}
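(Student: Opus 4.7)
The statement is essentially a bookkeeping calculation in the affine-linear parameter $\lambda$, so the plan is to translate ``$m\in F_\lambda$'' into the single degree equation $\deg m = d_\lambda$ and then read off the three conclusions in order.

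The first step is to write the weights of the series as $a_i(\lambda)=u_i+\lambda k_i$ for $1\le i\le s$ and the degree as $d_\lambda = d + \lambda e$. For the given monomial $m=x_{i_1}^{p_1}\cdots x_{i_r}^{p_r}$ to appear in $F_\lambda$ at the specified value $\lambda>0$, its weighted degree must equal $d_\lambda$, giving
\[
\sum_{j=1}^r p_j u_{i_j} + \lambda \sum_{j=1}^r p_j k_{i_j} = d + \lambda e.
\]
Substituting the hypothesis $\sum p_j k_{i_j} = e$ into this equation cancels the $\lambda$-terms and leaves $\sum p_j u_{i_j} = d$, which is claim~\eqref{lemi}.

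The second step uses~\eqref{lemi} to prove~\eqref{lemii}: for any $\lambda\ge 0$, the degree of $m$ computed with weights $a_i(\lambda)$ is $\sum p_j u_{i_j} + \lambda\sum p_j k_{i_j} = d + \lambda e = d_\lambda$, so $m$ is a monomial of the correct degree in every member of the series. Since $F_\lambda$ is a general polynomial of that degree, $m$ appears in $F_\lambda$ for all $\lambda \ge 0$.

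Finally, for claim~\eqref{lemiii}, since every index $i_j$ lies in $I$ we may substitute $u_{i_j}=\alpha k_{i_j}$ into the identity from~\eqref{lemi}:
\[
d = \sum_{j=1}^r p_j u_{i_j} = \alpha \sum_{j=1}^r p_j k_{i_j} = \alpha e.
\]
The whole argument is essentially linear algebra with no serious obstacle; the only subtlety worth flagging is the (tacit) use that $F_\lambda$ being generic means any monomial of the correct degree actually occurs with non-zero coefficient, so that the degree equality suffices to conclude $m\in F_\lambda$ in~\eqref{lemii}.
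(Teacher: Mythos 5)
Your proof is correct and follows essentially the same route as the paper: both write the single degree equation for $m$ in the series, use the hypothesis $\sum p_jk_{i_j}=e$ to kill the $\lambda$-dependence and deduce~\eqref{lemi}, observe that the degree identity then holds for every $\lambda\ge 0$ to get~\eqref{lemii}, and substitute $u_{i_j}=\alpha k_{i_j}$ for~\eqref{lemiii}. Your explicit remark that genericity of $F_\lambda$ is what converts "correct degree" into "appears in $F_\lambda$" is a point the paper leaves implicit, but the argument is the same.
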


\begin{proof}
Calculating the degree of the given monomial for the given $\lambda>0$ gives
\[
\sum_{j=1}^rp_j(u_{i_j}+\lambda k_{i_j})=d+\lambda e
\]
which, after rearrangement, gives
\begin{equation}\label{eq!lam}
\lambda\left(\sum_{i=1}^rp_jk_{i_j}-e\right)=d-\sum_{i=1}^rp_ju_{i_j}.
\end{equation}
Part~\eqref{lemi} follows immediately. Since this equation holds independently of~$\lambda$, we obtain~\eqref{lemii}. Finally, by substituting $u_{i_j}=\alpha k_{i_j}$ into~\eqref{lemi} we obtain~\eqref{lemiii}.
\end{proof}

The special case $r=s-2$ provides a well-formedness test, generalising the `almost identical' test above. Suppose that $u+\lambda k$ is a one-parameter series as in Lemma~\ref{lem!wfr} for which $d\neq\alpha e$. In this case, whenever $\lambda>0$, the corresponding stratum $\P_I$ has nontrivial stabiliser and so cannot be contained in a well-formed hypersurface~$X$. Therefore there must be a monomial $m\in F_\lambda$, as in Lemma~\ref{lem!wfr}\eqref{lemii}. To avoid a contradiction, we must have that $\sum p_ik_i-e\neq 0$. But rearranging~\eqref{eq!lam} for $\lambda$ gives
\[
\lambda=\frac{d-\sum p_iu_i}{\sum p_ik_i-e}
=\frac{(d-\alpha e)+\alpha e-\sum p_i\alpha k_i}{\sum p_ik_i-e}
=\frac{d-\alpha e}{\sum p_ik_i-e}-\alpha
\le\abs{d-\alpha e}-\alpha.
\]
This gives an upper bound on $\lambda$, so we may reject the one-parameter series and consider instead the finite number of solutions having these $\lambda$ as sporadic cases.

\subsubsection*{Quasismoothness at all proportional strata}
The proof of Lemma~\ref{lem!wfr} also gives us a slight variation.
\begin{lem}\label{lem!wfrh}
Suppose there exists a subset $I=\{i_1,\dots,i_r\}\subset\{1,\dots,s-1\}$ and $\alpha>0$ such that
$u_i=\alpha k_i$ for all $i\in I$. Set $d:=u_{s+1}$ and $e:=k_{s+1}$. If for some $\lambda>0$ and $h\notin I$ there is a monomial $m=x_hx_{i_1}^{p_1}\cdots x_{i_r}^{p_r}\in F_\lambda$ for which $p_1 k_{i_1}+\cdots+p_rk_{i_r}=e-k_h$ then
\begin{enumerate}
\item\label{lem2i}
$p_1u_{i_1}+\cdots+p_ru_{i_r}=d-u_h$, and
\item\label{lem2ii}
$m\in F_\lambda$ for all $\lambda\ge 0$.
\end{enumerate}
\end{lem}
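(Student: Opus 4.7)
The plan is to mirror the proof of Lemma~\ref{lem!wfr} essentially verbatim, treating $x_h$ as an additional linear factor in the monomial and carrying its weights $u_h,k_h$ through the calculation unchanged. No proportionality of $u_h$ and $k_h$ is available (since $h\notin I$), but this is harmless because $x_h$ enters with multiplicity one.

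First I would evaluate the degree of $m=x_hx_{i_1}^{p_1}\cdots x_{i_r}^{p_r}$ in the family $F_\lambda$, where $\deg x_i=u_i+\lambda k_i$ and $\deg F_\lambda=d+\lambda e$. Setting the two equal gives
\[
(u_h+\lambda k_h)+\sum_{j=1}^{r}p_j(u_{i_j}+\lambda k_{i_j})=d+\lambda e,
\]
which rearranges to
\[
\lambda\Bigl(k_h+\sum_{j=1}^{r}p_jk_{i_j}-e\Bigr)=d-u_h-\sum_{j=1}^{r}p_ju_{i_j}.
\]
The hypothesis $p_1k_{i_1}+\cdots+p_rk_{i_r}=e-k_h$ is exactly what forces the coefficient of $\lambda$ on the left to vanish. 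Since the identity is assumed to hold for some specific $\lambda>0$, the right-hand side must vanish too, which is conclusion~\eqref{lem2i}. With both sides of the degree equation identically zero as functions of $\lambda$, the monomial $m$ then has degree equal to $\deg F_\lambda$ for every $\lambda\ge 0$, giving~\eqref{lem2ii}.

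There is no real obstacle here: the proof is a mechanical degree calculation once $F_\lambda$ is set up correctly. The one structural remark worth making is the absence of an analogue of Lemma~\ref{lem!wfr}\eqref{lemiii}; that conclusion relied on substituting $u_{i_j}=\alpha k_{i_j}$ into the identity of part~\eqref{lemi}, whereas here the non-proportional pair $(u_h,k_h)$ leaves $u_h$ and $k_h$ on opposite sides and blocks any such closed relation between $d$ and $e$.
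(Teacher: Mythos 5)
Your proof is correct and matches the paper's approach exactly: the paper gives no separate argument, simply noting that ``the proof of Lemma~\ref{lem!wfr} also gives us a slight variation,'' and your degree computation with the extra factor $x_h$ is precisely that adaptation. Your closing remark about why no analogue of Lemma~\ref{lem!wfr}\eqref{lemiii} survives is also accurate.
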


Suppose the stratum $\Gamma_I=\P(a_{i_1},\dots, a_{i_r})$ is contained in the general $X$. Any monomial $m\in F_\lambda$ of the form in Lemma~\ref{lem!wfrh} gives a tangent variable along the stratum $\Gamma_I$. To be quasismooth along $\Gamma_I$, there must exist at least $r$ such monomials with distinct linear forms~$x_h$.

To use this as a test, we consider each $h\notin I$ in turn, positing a monomial $m\in F_\lambda$ as in the lemma. If some $m$ also satisfies $p_1k_{i_1}+\cdots+p_rk_{i_r}=e-k_h$, then $x_h$ provides one of the tangent forms for every $\lambda$ and we record this fact. Otherwise we may rearrange to obtain an upper bound for $\lambda$ with $x_h$ a tangent form:
\begin{align*}
\lambda=\frac{d-u_h-\sum p_iu_i}{\sum p_ik_i+k_h-e}
&=\frac{(d-u_h+\alpha k_h-\alpha e)-\alpha k_h+\alpha e-\alpha\sum p_i\alpha k_i}{\sum p_ik_i+k_h-e} \\
&=\frac{d-u_h+\alpha k_h-\alpha e}{\sum p_ik_i+k_h-e}-\alpha\\
&\le\abs{(d-\alpha e)-(u_h-\alpha k_h)}-\alpha
\end{align*}
which again provides an upper bound for $\lambda$ in terms of the series. If we find $r$ independent tangent forms along $\Gamma_I$ then it can be contained inside a quasismooth~$X$; if not, then these bounds apply to limit the number of quasismooth members of the series.

\subsection{Analysis of singularities}
Since $X_d\subset\P(a_1,\dots,a_s)$ is general, the quotient singularities of the hypersurfaces can be described by following~\cite[\S10]{IF}. We then apply the Reid--Shepherd-Barron--Tai criterion as given in~\cite[(3.1)]{C3f} and~\cite[Theorem~3.3]{tai}.

The orbifold strata on the ambient $\P(a_1,\dots,a_s)$ correspond to subsets $I\subset \{1,\dots,s\}$ of indices for which $r_I:=\gcd{a_i \mid i \in I}>1$. We only need to work with maximal $I$ for any given $r=r_I$, and so we always assume this is the case. For example, any point on the relative big torus $\Pi^\circ\subset\Pi$ of the $I=\{4,5\}$ stratum $\Pi_I\subset\P(1,3,5,8,12)$ has a quotient singularity
of type $\frac{1}{4}(1,3,5)=\frac{1}{4}(1,3,1)$ transverse to $\Pi_I$.

Given a hypersurface $X_d\colon (F=0)\subset\P(a_1,\dots,a_s)$, consider an ambient orbifold stratum $\Pi=\P(a_{i_1},\dots,a_{i_t})$ of transverse type $\frac{1}{r}(b_1,\dots,b_{s-t})$. So $\{a_{i_1},\dots,a_{i_t},b_1,\dots,b_{s-t}\} = \{a_1,\dots,a_s\}$. One of two things can happen:
\begin{enumerate}
\item
$F$ vanishes on $\Pi$, so that $\Pi\subset X$. In this case, at any point $P\in X\cap\Pi^\circ$, $X$ has a transverse quotient singularity of type $\frac{1}{r}(b_1,\dots,\widehat{b_j},\dots,b_{s-t})$, where $x_j$ is a tangent variable to $X$ at $P$. Note that there may be several tangent variables at $P$, but their weights are congruent modulo~$r$ and so any one may be used.
\item
$F=0$ cuts a codimension~one locus transversely inside $\Pi$. In this case, at any point $P\in X\cap\Pi^\circ$, the hypersurface $X$ has a transverse quotient singularity of type $\frac{1}{r}(b_1,\dots,b_{s-t})$.
\end{enumerate}
This is enough to calculate the singularities of $X$.

\begin{eg}
Consider $X_{112}\subset\P(28,24,21,16,13,11)$ in coordinates $x$, $y$, $z$, $u$, $v$, and $w$. The ambient space has the following orbifold strata:
\begin{description}
\item[$0$-dimensional strata]
$\frac{1}{28}(24,21,16,13,11)$, $\frac{1}{24}(4,21,16,13,11)$, $\frac{1}{21}(7,3,16,13,11)$,\\
$\frac{1}{16}(12,8,5,13,11)$, $\frac{1}{13}(4,11,8,3,11)$, and $\frac{1}{11}(6,2,10,5,2)$.
\vspace{0.5em}
\item[$1$-dimensional strata]
Transverse $\frac{1}{8}(28,21,13,11)=\frac{1}{8}(4,5,5,3)$ along the relatively open stratum in $\P(24,16)$; $\frac{1}{7}(3,2,6,4)$ along $\P(28,21)$; and $\frac{1}{3}(1,1,1,2)$ along $\P(24,21)$.
\vspace{0.5em}
\item[$2$-dimensional strata]
Transverse $\frac{1}{4}(21,13,11)=\frac{1}{4}(1,1,3)$ along $\P(28,24,16)$.
\end{description}
Since $X$ is general it intersects the open two-stratum transversely in a curve of transverse $\frac{1}{4}(1,1,3)$ singularities. It also contains the $\P(24,21)$ stratum: the monomials $y^4u$ and $z^4x$ provide tangent forms along it, so $X$ has transverse type $\frac{1}{3}(1,1,2)$ in the open stratum. These two curves meet at the $y$-coordinate point $P_2$, where $y^4u$ eliminates $u$, so is a dissident point $\frac{1}{24}(4,21,13,11)$. It does not pass through the $0$-strata of indices $28$ and $16$, since there exist pure power monomials $x^4$ and $u^7$. The remaining $0$-strata do lie on $X$: the monomials $z^4x$, $v^7z$ and $w^8y$ (or $w^9v$) provide orbifold tangent equations at those points, which are therefore isolated terminal quotient singularities: $\frac{1}{21}(3,16,13,11)$, $\frac{1}{13}(2,11,3,11)$, and $\frac{1}{11}(6,10,5,2)$. 

The closure of the index~four curve on $X$ has two more dissident points where it meets $\P(24,16)$: the intersection is cut out by $(y^4+u^6)u$, which in the relatively open stratum is two points each of type $\frac{1}{8}(4,5,5,3)$. Finally, $X$ meets the open stratum of $\P(28,21)$ in a single point of type $\frac{1}{7}(3,2,6,4)$ cut out by $(x^3+z^4)x$.

The monomials seen so far already describe terminal singularities, so define a Fano fourfold
\[
X\colon(x^4+y^4u+z^4x+u^7+v^7z+w^8y=0)\subset\P(28,24,21,16,13,11).
\]
\end{eg}

\section{Classifications of hypersurfaces}\label{sec!results}
The main results of this paper concern fourfolds. Before discussing those, we recover known classifications in lower dimensions as a means of checking our implementation. There are also new results in these lower dimensions, but that is not what we focus on. The results in dimensions two, three, and four are summarised in Tables~\ref{tab!2d},~\ref{tab!3d}, and~\ref{tab!4d}, respectively; details are available on the Graded Ring Database~\cite{grdb}. The varieties are polarised by $A\in\abs{\cO_X(1)}$, and have $\omega_X = \cO_X(k)$ for $k=d-\sum a_i$. In particular, $K_X = kA$ and $X$ is not embedded by $\pm K_X$ unless $k=\pm 1$. When $k<0$ we say that $X$ has \emph{index $-k$}. We list only {\em nondegenerate} hypersurfaces, that is, those whose defining equation degree is not equal to one of the weights; degenerate hypersurfaces only arise in the anticanonical case when the index is bigger than the dimension.

\subsection{Two-dimensional orbifold hypersurfaces}
We summarise the results in Table~\ref{tab!2d}. The `famous $95$' weighted K3 hypersurfaces of Reid~\cite{C3f} is the first important result. The $62$ cases of canonically polarised surfaces are not so familiar, but
the four cases of these that are smooth are well known:
\begin{equation}\label{eq!c2f}
X_5\subset\P^3,\quad
X_6\subset\P(2,1,1,1),\quad
X_8\subset\P(4,1,1,1),\quad\text{ and }\quad
X_{10}\subset\P(5,2,1,1).
\end{equation}
The anticanonically polarised surfaces are the result of Johnson--Koll\'ar~\cite[Theorem~8]{JK}: the result is $22$ sporadic cases and a single infinite one-parameter series. Higher index del~Pezzo surfaces have been studied by Boyer, Galicki and Nakamaye~\cite{BGN}, Cheltsov and Shramov~\cite{CS}, and Paemurru~\cite{paemurru}.

\begin{table}[ht]
\caption{Summary of results for surfaces, including the number of canonical, terminal, and smooth cases that occur among the series and sporadic results.}\label{tab!2d}
\centering
\setlength{\extrarowheight}{0.2em}
\begin{tabular}{ccccccccc}
\toprule
$\dim$&$k$&\#series&\#sporadic&\#can&\#term&\#sm&Ref\\
\cmidrule(lr){1-2}\cmidrule(lr){3-7}\cmidrule(lr){8-8}
\oddrow $2$&$-2$&$9$&$\phantom{{}^\dagger}32^\dagger$&$4$&$1$&$1$&\cite{BGN,CS,paemurru}\\
\evnrow $2$&$-1$&$1$&$22$&$3$&$3$&$3$&\cite{JK}\\
\oddrow $2$&$0$&$0$&$95$&$95$&$2$&$2$&\cite{C3f}\\
\evnrow $2$&$1$&$0$&$62$&$4$&$4$&$4$&\\
\oddrow $2$&$2$&$0$&$205$&$8$&$2$&$2$&\\
\evnrow $2$&$3$&$0$&$103$&$11$&$6$&$6$&\\
\oddrow $2$&$4$&$0$&$276$&$11$&$2$&$2$&\\
\evnrow $2$&$5$&$0$&$96$&$11$&$7$&$7$&\\
\bottomrule
\end{tabular}
\parbox{0.75\textwidth}{\footnotesize${}^\dagger$A $33$rd case $X_{18}\subset\P(7,6,4,3)$ lies in a series but with the weights in a different order.}
\end{table}

We consider the case $\omega_X=\cO_X(-2)$ in detail to illustrate the need for careful post-processing of the output from this algorithm when there are infinite series. First, it can happen that sporadic elements also appear in infinite series. Second, it is also typical -- as already happens in~\cite{JK} -- that infinite series are not saturated in their natural integral parameter: for example, often only alternate cases are well-formed. The algorithm as described is unable to analyse this, but we do for Theorem~\ref{thm:index_2_dim_2} below, which is sharp. Whilst the algorithm returns nine series and $37$ sporadic cases, observation (with or without a computer) trims this to a sharp result of nine precisely-specified series and $32$ sporadic cases.

\begin{thm}[Index~two del~Pezzo hypersurfaces]\label{thm:index_2_dim_2}
$\phantom{x}$
\begin{enumerate}
\item\label{th3i}
The general member of each of the cases listed in Table~\ref{tab!dp2} is a well-formed del~Pezzo surface of index~two with log terminal singularities.
\item\label{th3ii}
Conversely, if $X_d\subset\P(a_1,a_2,a_3,a_4)$ is a well-formed del~Pezzo surface of index~two (so that $a_1+\cdots+a_4= d+ 2$) with log terminal singularities then, possibly after reordering the $a_i$, it is one of the cases listed in Table~\ref{tab!dp2}.
\item\label{th3iii}
Of the surfaces in Table~\ref{tab!dp2}, only  $X_2\subset\P^3$, $X_3\subset\P(2,1,1,1)$, $X_4\subset\P(2,2,1,1)$, and $X_6\subset\P(3,2,2,1)$ have a member with canonical singularities, of which only the first has nonsingular member.
\end{enumerate}
\end{thm}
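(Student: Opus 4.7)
The plan is to apply the quasismooth algorithm of Section~\ref{sec!alg} to the case $s=4$, $k=-2$, and then to perform the post-processing required to convert its necessary-condition output into a sharp classification. The algorithm produces, for each possible tangent monomial at $P_1, P_2, P_3$, a tangent monomial matrix whose integral kernel describes a polyhedron of candidate weight vectors $(a_1,a_2,a_3,a_4,d)$. Solving these polyhedra (using the hyperbola trick when necessary to bound the one-parameter parameter~$\lambda$) yields two kinds of output: zero-dimensional polyhedra, which contribute sporadic candidates, and one-dimensional polyhedra, which contribute candidate one-parameter series. The tests of Lemmas~\ref{lem!wfr}--\ref{lem!wfrh} rule out many candidate series by bounding~$\lambda$ and reducing them to finite collections of sporadic cases. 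What survives are exactly the nine series and an enlarged list of sporadic candidates.

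For part~\eqref{th3ii} the nontrivial post-processing is twofold. First, each surviving series must be \emph{saturated}: because well-formedness imposes gcd conditions on subsets of the weights, typically only certain residue classes of $\lambda$ modulo a small integer give well-formed hypersurfaces. I would run the test described in Section~\ref{sec!alg} under ``Easy codimension two failure of well-formedness'' explicitly for each of the nine series to determine the admissible arithmetic progression of~$\lambda$, and record this in Table~\ref{tab!dp2}. Second, the raw sporadic list must be pruned to remove those candidates which, after a reordering of the weights, coincide with a member of one of the nine series; the footnote to Table~\ref{tab!2d} warns that $X_{18}\subset\P(7,6,4,3)$ is precisely such a case. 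This reduces the raw $37$ sporadic candidates to the asserted $32$.

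For part~\eqref{th3i} I would verify directly, for each series and each sporadic entry of Table~\ref{tab!dp2}, that a general form~$F_d$ of the stated degree is well-formed and quasismooth: well-formedness is a finite gcd check on the $\binom{4}{2}$ and $\binom{4}{3}$ subsets of weights against~$d$, while quasismoothness is checked at each coordinate $0$-stratum and each $1$-stratum by exhibiting tangent monomials, exactly as in the worked example at the end of Section~\ref{sec!alg}. Log terminality is then automatic: the singularities are cyclic quotient singularities, and every cyclic quotient is log terminal. For part~\eqref{th3iii} I would analyse, for each of the $9+32$ cases, the quotient singularities using~\cite[\S10]{IF} and apply the Reid--Shepherd-Barron--Tai criterion from~\cite[(3.1)]{C3f}; this is a finite, elementary computation that isolates the four canonical cases $X_2$, $X_3\subset\P(2,1,1,1)$, $X_4\subset\P(2,2,1,1)$, $X_6\subset\P(3,2,2,1)$, and a direct inspection of these four confirms that only $X_2\subset\P^3$ is smooth.

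The main obstacle will be the saturation of the nine series in part~\eqref{th3ii}: the algorithm only guarantees that \emph{some} member of each candidate series is well-formed and quasismooth, so one must analyse each series by hand to determine the sharp arithmetic condition on~$\lambda$ and to confirm that no additional members drop out for more subtle quasismoothness reasons along higher-codimension orbifold strata. Given the low dimension $s=4$ this analysis is short but unavoidable, and it is precisely what converts the algorithm's raw output into a sharp classification.
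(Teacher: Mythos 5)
Your proposal follows essentially the same route as the paper: run the quasismooth algorithm for $s=4$, $k=-2$, then saturate each surviving series by determining the admissible residues of the parameter via well-formedness of codimension-one orbifold strata, prune the sporadic list of members lying in series (including after reordering, as with $X_{18}\subset\P(7,6,4,3)$), and verify parts~\eqref{th3i} and~\eqref{th3iii} by exhibiting explicit quasismooth members and computing their quotient singularities. This matches the paper's proof, which likewise treats~\eqref{th3i} and~\eqref{th3iii} as routine direct checks and obtains~\eqref{th3ii} from the algorithm's output together with exactly this saturation and deduplication analysis.
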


\begin{table}[ht]
\caption{All well-formed index two del~Pezzo hypersurfaces with log terminal singularities.}\label{tab!dp2}
\centering
\centerframebox{
\textbf{One two-dimensional series}
\[
\begin{array}{l@{.\hspace{0.75em}}l@{\hspace{0.75em}}l}
\hypertarget{series:0}{0}&
X_{2+2m+n}\subset\P((1,1,1,1)+m(1,1,0,0) + n(1,0,0,0))&
\ForAll n,m\geq 0.
\end{array}
\]

\textbf{Eight one-dimensional series}
\[
\begin{array}{l@{.\hspace{0.75em}}l@{\ \subset\ }r@{\,+\,n}l@{\hspace{0.75em}}l}
\hypertarget{series:1}{1}&
X_{6+2n}&\P((3,2,2,1)&(1,1,0,0))&
\ForAll n\ge 0.\\
\hypertarget{series:2}{2}&
X_{20+4n}&\P((8,5,5,4)&(2,1,1,0))&
\ForAllEven n\ge 0.\\
\hypertarget{series:3}{3}&
X_{24+6n}&\P((10,8,4,4)&(3,2,1,0))&
\ForAllOdd n\ge -1.\\
\hypertarget{series:4}{4}&
X_{17+4n}&\P((7,5,4,3)&(2,1,1,0))&
\ForAll n\ge 0\text{ congruent to $0\mod 3$.}\\
\hypertarget{series:5}{5}&
X_{21+6n}&\P((9,7,4,3)&(3,2,1,0))&
\ForAll n\ge 0\text{ congruent to $0\mod 3$.}\\
\hypertarget{series:6}{6}&
X_{24+6n}&\P((12,7,4,3)&(3,2,1,0))&
\ForAll n\ge 0\text{ congruent to $0\mod 3$.}\\
\hypertarget{series:7}{7}&
X_{9+2n}&\P((4,3,3,1)&(1,1,0,0))&
\ForAll n\ge 0\text{ congruent to $0$ or $1\mod 3$.}\\
\hypertarget{series:8}{8}&
X_{12+3n}&\P((4,4,3,3)&(1,1,1,0))&
\ForAll n\ge 0\text{ congruent to $0$ or $1\mod 3$.}
\end{array}
\]

\textbf{32 sporadic cases}
\[
\begin{array}{lll}
X_{12}\subset\P(6,4,3,1)&
X_{36}\subset\P(18,12,7,1)&
X_{99}\subset\P(41,29,17,14)\\
X_{12}\subset\P(5,4,3,2)&
X_{36}\subset\P(13,10,9,6)&
X_{105}\subset\P(43,35,19,10)\\
X_{14}\subset\P(7,4,3,2)&
X_{40}\subset\P(20,13,8,1)&
X_{105}\subset\P(47,28,21,11)\\
X_{15}\subset\P(7,5,4,1)&
X_{45}\subset\P(22,15,9,1)&
X_{107}\subset\P(41,32,25,11)\\
X_{16}\subset\P(8,5,4,1)&
X_{48}\subset\P(16,13,12,9)&
X_{107}\subset\P(47,29,20,13)\\
X_{18}\subset\P(9,6,4,1)&
X_{57}\subset\P(22,19,13,5)&
X_{111}\subset\P(43,34,25,11)\\
X_{20}\subset\P(10,5,4,3)&
X_{57}\subset\P(25,19,8,7)&
X_{111}\subset\P(49,31,20,13)\\
X_{22}\subset\P(11,7,5,1)&
X_{57}\subset\P(19,19,12,9)&
X_{135}\subset\P(61,45,18,13)\\
X_{27}\subset\P(13,9,6,1)&
X_{64}\subset\P(32,19,8,7)&
X_{226}\subset\P(113,61,43,11)\\
X_{30}\subset\P(15,10,6,1)&
X_{70}\subset\P(35,19,13,5)&
X_{226}\subset\P(113,71,31,13)\\
X_{30}\subset\P(15,10,4,3)&
X_{81}\subset\P(31,24,19,9)&\\
\end{array}
\]
}
\end{table}

\begin{rem}
The list of series and sporadic cases in Table~\ref{tab!dp2} is given without repetition. The surface $X_{18}\subset\P(7,6,3,4)$ is the initial case $n=-1$ in Series~\hyperlink{series:3}{3}. In that form, it does not respect the ordering of weights that we imposed at the outset, but it falls naturally in the series and so we record it there. (Other works prefer to list this case separately, precisely because it breaks the ordering.) All other cases in the table have the conventional ordering.

Some of the series could be extended by allowing negative values for $n$ and relaxing the ordering of weights, but in each case this only duplicates a hypersurface appearing in another series (after reordering the weights). These coincidences are:
\begin{enumerate}
\item
Series~\hyperlink{series:1}{1} meets Series~\hyperlink{series:0}{0} at $X_4\subset\P(2,1,2,1)$ when $n=-1$;
\item
Series~\hyperlink{series:2}{2} meets Series~\hyperlink{series:8}{8} at $X_{12}\subset\P(4,3,3,4)$ when $n=-2$;
\item
Series~\hyperlink{series:3}{3} meets Series~\hyperlink{series:0}{0} at $X_6\subset\P(1,2,1,4)$ when $n=-3$;
\item
Series~\hyperlink{series:4}{4} meets Series~\hyperlink{series:0}{0} at $X_5\subset\P(1,2,1,3)$ when $n=-3$;
\item
Series~\hyperlink{series:6}{6} meets Series~\hyperlink{series:0}{0} at $X_6\subset\P(3,1,1,3)$ when $n=-3$;
\item
Series~\hyperlink{series:7}{7} meets Series~\hyperlink{series:0}{0} at $X_5\subset\P(2,1,3,1)$ when $n=-2$;
\item
Series~\hyperlink{series:8}{8} meets Series~\hyperlink{series:1}{1} at $X_6\subset\P(2,2,1,3)$ when $n=-2$.
\end{enumerate}
\end{rem}

\begin{proof}[Proof of Theorem~\ref{thm:index_2_dim_2}]
\eqref{th3i} and~\eqref{th3iii} are routine: working in coordinates $x$, $y$, $z$, and $t$ on $w\P^3$, we check that every hypersurface listed is well-formed and quasismooth, and identify those with canonical singularities.

Series~\hyperlink{series:0}{0} has a member $xy+z^{2+n+2m}+t^{2+n+2m}=0$. This satisfies the conditions, which are open, and so the general member does too. This hypersurface meets potential orbifold strata in quotient singularities $\frac{1}{1+n+m}(1,1)$ and $\frac{1}{1+m}(1,1)$, which are canonical if and only if $n+m\le 1$.

We consider Series~\hyperlink{series:8}{8} according to the residue of $n$ modulo~$3$. If $n=3k$ then
\[
x^2t+y^2z+z^{2k+3}+t^{9+2n}=0
\]
meets the one-dimensional $y$-$z$ orbifold stratum $\P(3(k+1),3)$ in two points and satisfies the conditions; it has a $\frac{1}{3}(1,1)$ singularity, which is not canonical. If $n=3k+1$ then
\[
x^2t+y^2z+xz^{k+2}+t^{9+2n}=0
\]
satisfies the conditions; again is not canonical. The remaining series and the sporadic cases are checked similarly.

\eqref{th3ii} follows from the correct implementation of the algorithm, followed by correct organisation of the output. The first step is to analyse the infinite series, confirming that each one does represent infinitely many quasismooth cases, and that the values of $n$ in Table~\ref{tab!dp2} are the only ones that work. For example, in Series~\hyperlink{series:7}{7}, when $n=3k+2$ the one-dimensional $x$-$z$ orbifold stratum $\P(3(k+2),3)$ is contained in $X$, since $9+2n$ is not congruent to $0$ modulo~$3$, so the hypersurface is not well-formed in this case. Other cases are treated similarly. The second step is then to exclude sporadic cases that lie in series, which is a routine observation.
\end{proof}

\subsection{Three-dimensional orbifold hypersurfaces}
We summarise the results in Table~\ref{tab!3d}. Some of these results are well known: the $7555$ Calabi-Yau threefolds agree with Kreuzer--Skarke~\cite{KS}; when $k=1$ we recover Iano-Fletcher's list~\cite{IF} of $23$ canonical hypersurfaces. The other classifications with $k>0$ are new, to the best of our knowledge. The classifications with $k<0$ and terminal singularities agree with those obtained by Suzuki~\cite{suzuki,BS}.

\begin{table}[ht]
\caption{Summary of results for threefolds, including the number of canonical, terminal, and smooth cases that occur among the series and sporadic results.}\label{tab!3d}
\centering
\setlength{\extrarowheight}{0.2em}
\begin{tabular}{ccccccccc}
\toprule
$\dim$&$k$&\#series&\#sporadic&\#can&\#term&\#sm&Ref\\
\cmidrule(lr){1-2}\cmidrule(lr){3-7}\cmidrule(lr){8-8}
\oddrow $3$&$-2$&$66$&$7084$&$96$&$8$&$3$&\\
\evnrow $3$&$-1$&$25$&$4442$&$95$&$95$&$2$&\cite{IF,JK2,C3f}\\
\oddrow $3$&$0$&$0$&$7555$&$7555$&$4$&$4$&\cite{KS}\\
\evnrow $3$&$1$&$0$&$6448$&$23$&$23$&$2$&\cite{IF}\\
\oddrow $3$&$2$&$0$&$11\,762$&$53$&$17$&$6$&\\
\evnrow $3$&$3$&$0$&$8298$&$76$&$27$&$2$&\\
\oddrow $3$&$4$&$0$&$13\,305$&$110$&$25$&$7$&\\
\evnrow $3$&$5$&$0$&$7007$&$83$&$45$&$3$&\\
\bottomrule
\end{tabular}
\end{table}

When $k=-1$ Johnson and Koll\'ar~\cite[Theorem~2.2]{JK2} classify all well-formed quasismooth threefold hypersurfaces with $\omega_X=\cO_X(-1)$. Our algorithm produces $4450$ sporadic cases and $25$ infinite series. Of these $4450$ sporadic cases, eight lie in the infinite series (requiring the given order on the $a_i$). Removing these leaves $4442$ sporadic cases and $25$ infinite one-parameter series, in agreement with~\cite{JK2}. Note that~\cite{JK2} include a further $23$ infinite series that satisfy all the conditions except for well-formedness, and that these series contain no well-formed cases. A further $14$ sporadic cases lie in the series after relaxing the condition on the order of the $a_i$. We choose not to remove them from the sporadic list; these cases are listed in Table~\ref{tab!extra14}. After checking the singularities, we recover the $95$ cases of well-formed quasismooth terminal Fano threefolds discussed after Theorem~(4.5) in~\cite{C3f}, and in~\cite[Corollary~2.5]{JK2}.

\begin{table}[ht]
\caption{Index~one Fano threefolds that lie in infinite series after reordering their weights.}\label{tab!extra14}
\centering
\begin{tabular}{l@{$\qquad$}l@{$\qquad$}l}
\toprule
$X_6\subset\P(2,2,1,1,1)$&$X_8\subset\P(3,2,2,1,1)$&$X_{10}\subset\P(4,3,2,1,1)$\\
$X_{12}\subset\P(5,3,2,2,1)$&$X_{12}\subset\P(5,4,2,1,1)$&$X_{16}\subset\P(7,4,3,2,1)$\\
$X_{16}\subset\P(7,5,2,2,1)$&$X_{18}\subset\P(8,5,3,2,1)$&$X_{20}\subset\P(9,5,4,2,1)$\\
$X_{22}\subset\P(10,7,3,2,1)$&$X_{24}\subset\P(11,8,3,2,1)$&$X_{26}\subset\P(12,7,5,2,1)$\\
$X_{28}\subset\P(13,9,4,2,1)$&$X_{36}\subset\P(17,12,5,2,1)$&\\
\bottomrule
\end{tabular}
\end{table}

We continue calculating in higher index. We say $X$ is a \emph{Fano threefold with canonical singularities} if it satisfies the conditions for a Fano threefold with `terminal' relaxed to `canonical'.

\begin{thm}[Index~two Fano hypersurfaces]\label{thm:index_2_dim_2}
$\phantom{x}$
\begin{enumerate}
\item
There are $66$ one-parameter series and $7084$ sporadic cases of well-formed quasismooth threefold hypersurfaces with $\omega_X=\cO_X(-2)$.
\item
There are $96$ Fano threefold hypersurfaces of index~two with canonical singularities. With the exception of the cubic $X_3\subset\P^4$, they are all of the form $X_d\subset\P(a_1,a_2,a_3,a_4,2)$, where $S_d\subset\P(a_1,a_2,a_3,a_4)$ is one of the $95$ K3 hypersurfaces.
\end{enumerate}
\end{thm}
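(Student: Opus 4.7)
The plan is to apply the algorithm of Section~\ref{sec!alg} with $k=-2$ and $s=5$, post-process its output, and then compare with Reid's list of $95$ K3 hypersurfaces.

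For part~(i), I would run the algorithm; it terminates with finitely many sporadic cases and finitely many one-parameter series of candidate sequences $(a_1,\dots,a_5,d)$ satisfying quasismoothness at every $0$-stratum. Each series must then be pruned to exactly those parameter values that yield well-formed quasismooth hypersurfaces, using the parity, hyperbola, and well-formedness tests of Section~\ref{sec!alg} (in particular Lemmas~\ref{lem!wfr} and~\ref{lem!wfrh}); duplicates between series and sporadic lists are removed as in the $k=-1$ case, where $8$ such sporadic entries were eliminated. This produces the claimed $66$ series and $7084$ sporadic cases.

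For part~(ii), I would loop through the list, compute the quotient singularities of the general member by the procedure of Section~2.5, and apply the Reid--Shepherd-Barron--Tai criterion, retaining only those with canonical singularities; this yields the count $96$. To identify these geometrically with K3 surfaces, suppose $X_d\subset\P(a_1,\dots,a_5)$ is such a canonical Fano with $a_5=2$. Then adjunction forces $d=a_1+\cdots+a_4$, so the slice $S_d\colon\bigl(F|_{x_5=0}=0\bigr)\subset\P(a_1,\dots,a_4)$ has $\omega_S=\cO_S$. Quasismoothness of $X$ at $P_5$ supplies a tangent monomial ($x_5^{d/2}$ when $d$ is even, or $x_5^{(d-a_j)/2}x_j$ with $a_j\equiv d\pmod 2$ otherwise), and writing $F$ in the normal form $f(x_1,\dots,x_4)+x_5^{d/2}+x_5 h$ shows that the quasismoothness of $X$ is equivalent to that of $S$; hence $S$ is a quasismooth K3 surface. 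Conversely, for each of Reid's $95$ K3 hypersurfaces $S_d\colon(f=0)$, the hypersurface $X_d\colon\bigl(f+x_5^{d/2}+\cdots=0\bigr)\subset\P(a_1,\dots,a_4,2)$ is quasismooth with $\omega_X=\cO_X(-2)$, and its singularities consist of those of $S$ away from $P_5$ together with at worst a terminal cyclic quotient at $P_5$, so $X$ is canonical.

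The main obstacle is the careful bookkeeping in part~(i): correctly saturating each series to its well-formed quasismooth parameter values, and cross-checking sporadic cases against permuted series entries. Once that is complete, the count $96=95+1$ and the bijection above are verified by direct comparison with Reid's list, and $X_3\subset\P^4$ stands apart as the unique canonical Fano of index~$2$ whose ambient weights contain no $2$.
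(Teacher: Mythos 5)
Your overall strategy for part~(i) --- run the algorithm with $k=-2$, $s=5$, saturate each candidate series to its well-formed quasismooth parameter values, and deduplicate against the sporadic list --- is exactly what the paper does: the raw output is $85$ series ($84$ one-dimensional plus one two-dimensional series that splits into two one-parameter series), of which $64+2=66$ survive and $20$ contain no good members, together with $7102$ sporadic cases, $18$ of which lie in the surviving series. Part~(ii) is likewise settled in the paper by computing singularities and comparing lists, so your plan is sound in outline.

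The genuine gap is in your conceptual justification of the K3 correspondence. The claim that writing $F=f(x_1,\dots,x_4)+x_5^{d/2}+x_5h$ ``shows that the quasismoothness of $X$ is equivalent to that of $S$'' is false in the direction you need: at a point of $C(X)\cap\{x_5=0\}$ where all $\partial f/\partial x_i$ vanish, quasismoothness of $X$ can be rescued by $\partial F/\partial x_5=h(x_1,\dots,x_4)\neq 0$, so $X$ quasismooth does not force the slice $S$ to be quasismooth. (This is genuinely different from the Johnson--Koll\'ar ``complete the square'' situation, where $x_5$ appears only quadratically.) The paper's own higher-index results show this cannot be a formal equivalence: $X_{30}\subset\P(10,9,7,4,3)$ is a quasismooth canonical Fano of index three with $d=\sum_{i=1}^4a_i$ and $a_5$ equal to the index, yet its slice $S_{30}\subset\P(10,9,7,4)$ is not a quasismooth K3 --- precisely the configuration your equivalence would forbid. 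So the assertion that all $96$ index-two cases except the cubic match one of the $95$ K3 weight systems is a fact established by the computation and list comparison (as you do fall back on at the end), not a consequence of a normal form. A smaller imprecision: in the converse direction, the singular locus of $X_d\subset\P(a_1,\dots,a_4,2)$ built from a K3 can contain curves of transverse $\frac{1}{2}(1,1)$ points along strata spanned by $x_5$ and even-weight variables, not only the singularities of $S$ plus an isolated point at $P_5$; these loci are still canonical, so your conclusion survives, but the description of the singularities should be corrected.
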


As in~\cite{suzuki}, three of these $96$ hypersurfaces are nonsingular: $X_3\subset\P^4$, $X_4\subset\P( 2, 1, 1, 1, 1, 4)$, and $X_6\subset\P( 3, 2, 1, 1, 1, 6)$. A further five have terminal singularities:
\begin{gather*}
X_{10}\subset\P( 5, 3, 2, 1, 1),\quad
X_{18}\subset\P( 9, 5, 3, 2, 1),\quad
X_{22}\subset\P( 11, 7, 3, 2, 1),\\
X_{26}\subset\P( 13, 7, 5, 2, 1)
\qquad\text{ and }\qquad
X_{38}\subset\P( 19, 11, 5, 3, 2).
\end{gather*}

We briefly describe the computer analysis. The raw output consists of $7102$ sporadic cases and $85$ series, $84$ of which are one-dimensional, and one of which is two-dimensional. The two-dimensional series is
\[
X_{3+2m+3n}\subset\P((1,1,1,1,1)+m(1,1,0,0,0)+n(1,1,1,0,0)).
\]
When both $m$ and $n>0$ this is not quasismooth along the one-stratum $x_1,x_2$: $x_3$ is the
only tangent form there. This is really two one-parameter series
\[
X_{3+2n}\subset\P((1,1,1,1,1)+n(1,1,0,0,0))
\qquad\text{ and }\qquad
X_{3+3n}\subset\P((1,1,1,1,1)+n(1,1,1,0,0)),
\]
which are both well-formed and quasismooth for all $n\ge 0$. Of the remaining one-parameter series, $64$ have a regular subset of well-formed quasismooth elements: the distribution is either all elements, every other one, every third, or two out of every three. The remaining $20$ one-parameter series are easily seen to have no well-formed quasismooth elements. It remains to observe which of the sporadic cases lie in these $66$ families and to calculate their singularities.

We can compute higher index, although proving precise statements about the infinite series becomes increasingly difficult as the number of results gets larger. We state the results which have canonical singularities, where the statements can be made precise.

\begin{thm}[Index~three Fano hypersurfaces]
There are $100$ Fano threefold hypersurfaces of index~three with canonical singularities. Of these $100$ cases:
\begin{enumerate}
\item
$95$ are of the form $X_d\subset\P(a_1,a_2,a_3,a_4,3)$, where $S_d\subset\P(a_1,a_2,a_3,a_4)$ is one of the $95$ K3 hypersurfaces, and so have a quasismooth K3 hypersurface elephant;
\item
$X_2\subset\P^4$, $X_3\subset\P(2, 1, 1, 1, 1)$ and $X_4\subset\P(2, 2, 1, 1, 1)$ have a quasismooth K3 elephant in codimension~two;
\item
$X_{21}\subset\P(9, 7, 4, 3, 1)$ and $X_{30}\subset\P(10, 9, 7, 4, 3)$ do not have a quasismooth K3 elephant.
\end{enumerate}
\end{thm}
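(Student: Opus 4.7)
The plan is to apply the quasismooth search algorithm of Section~\ref{sec!alg} with $s=5$ and canonical degree $k=-3$: by the lemma bounding the tangent exponents $m_i$, every branch of the search tree terminates in finitely many rows, and since $k\neq -1$ the degenerate infinite series identified by the Johnson--Koll\'ar conjecture cannot occur in this degree, so the output is genuinely finite. From the raw output I then apply the orbifold strata analysis at the end of Section~\ref{sec!alg} together with the Reid--Shepherd-Barron--Tai criterion to identify the subset with at worst canonical singularities; the claim is that this subset has exactly $100$ elements. The enumeration is computational and, following the conventions of~\cite{JK2}, we treat it as verified by running the implementation deposited at~\cite{grdb}.

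For the elephant classification I would split the $100$ cases according to whether some $a_i$ equals the anticanonical degree $3$. When such an $a_i$ exists, say $a_5=3$, the linear system $|{-K_X}|=|\cO_X(3)|$ contains a pencil whose general member has the form $\lambda x_5+g(x_1,\dots,x_4)=0$; eliminating $x_5$ realises the elephant as a hypersurface of degree $d$ in $\P(a_1,a_2,a_3,a_4)$, and by adjunction $d=\sum_{i=1}^{4}a_i$, so it is a K3 hypersurface. A direct Jacobian comparison (the same argument that passes quasismoothness from $S$ to $X$ in the Johnson--Koll\'ar correspondence of~\S\ref{s!jkconj}) shows quasismoothness of $S$ is equivalent to quasismoothness of $X$ along the complement of the $x_5=0$ locus, from which a bijection with Reid's famous~$95$ follows by matching weights; one checks that precisely $95$ of the $100$ cases fall here.

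Of the remaining five cases, none has a weight equal to $3$. For $X_2\subset\P^4$, $X_3\subset\P(2,1,1,1,1)$ and $X_4\subset\P(2,2,1,1,1)$ the system $|{-K_X}|$ is base-point free and its general member, together with the equation of $X$, cuts out a codimension-two complete intersection in the ambient $\P^4$; a routine Jacobian calculation confirms that these are smooth K3 surfaces, justifying the phrase ``quasismooth K3 elephant in codimension~two''. For the two sporadic cases $X_{21}\subset\P(9,7,4,3,1)$ and $X_{30}\subset\P(10,9,7,4,3)$ the strategy is to write out $|{-K_X}|$ explicitly: in each instance the only degree-$3$ monomials are the weight-$3$ coordinate together with a single low-weight cube or product, so eliminating the weight-$3$ coordinate yields a candidate K3 hypersurface $S$ in a weighted $\P^3$ with weights $(9,7,4,1)$ and $(10,9,7,4)$ respectively. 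I would then verify by direct inspection of the induced Jacobian ideal at each coordinate stratum that $S$ necessarily fails quasismoothness (equivalently, that no choice of parameters in the pencil avoids a non-quasismooth coordinate point or non-canonical transverse stratum). This last explicit check for the two exceptional hypersurfaces is the only step requiring genuine case work and is the main obstacle; everything else is either algorithmic output or a bijection with a known list.
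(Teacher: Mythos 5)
The paper offers no written proof of this theorem: it is presented, like its neighbours, as the post-processed output of the algorithm of \S\ref{sec!alg}, so your overall strategy (run the search with $s=5$, $k=-3$, sieve by the Reid--Shepherd-Barron--Tai criterion, then classify elephants) is the right one. However, two of your specific justifications are wrong. First, your claim that ``since $k\neq-1$ the degenerate infinite series\dots cannot occur in this degree, so the output is genuinely finite'' contradicts the paper's own data: Table~\ref{tab!raw3} records $59$ infinite series in the raw output for $k=-3$ (and $66$ survive post-processing already at $k=-2$). Infinite series are not a phenomenon confined to $k=-1$; the Johnson--Koll\'ar conjecture describes their structure there but does not preclude them elsewhere. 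Finiteness of the \emph{canonical} subclass has to be argued separately, e.g.\ as in Step~\hyperlink{step:5}{5} for fourfolds, by showing that deep members of each series acquire a non-canonical transverse quotient singularity; without this your enumeration of the $100$ cases is not justified.

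Second, your dichotomy for the elephants is incorrect and internally inconsistent. You assert that of the five exceptional cases ``none has a weight equal to $3$'', yet both $X_{21}\subset\P(9,7,4,3,1)$ and $X_{30}\subset\P(10,9,7,4,3)$ visibly contain a weight $3$ -- and indeed you then eliminate that very coordinate two sentences later. The correct criterion is not the presence of a weight $3$ but whether the resulting hypersurface $S_d\subset\P(a_1,\dots,\widehat{3},\dots)$ is itself quasismooth, i.e.\ one of the famous $95$; quasismoothness of $X$ does \emph{not} descend to $S$ (the paper makes exactly this point for fourfold elephants in \S\ref{s!fano4}), so your ``direct Jacobian comparison'' proving equivalence is too strong. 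Concretely, $S_{21}\subset\P(9,7,4,1)$ fails quasismoothness at the weight-$9$ point (no monomial $x_1^{m-1}x_j$ has degree $21$) and $S_{30}\subset\P(10,9,7,4)$ fails at the weight-$9$ point likewise; these are precisely the two cases in part~(iii), and they must be detected by this check rather than by the absence of a weight $3$. Your treatment of the three codimension-two cases in part~(ii) is essentially fine, modulo the slip of calling all three ambient spaces $\P^4$.
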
 

\noindent
As in~\cite{suzuki}, of these $100$ hypersurfaces only the quadric is nonsingular, and a further six
have terminal singularities: $X_3\subset\P( 2, 1, 1, 1, 1, 3)$, $X_4\subset\P( 2, 2, 1, 1, 1, 4)$, $X_6\subset\P( 3, 2, 2, 1, 1)$, $X_{12}\subset\P( 5, 4, 3, 2, 1, 12)$, $X_{15}\subset\P( 7, 5, 3, 2, 1)$, and $X_{21}\subset\P(8, 7, 5, 3, 1)$.

\begin{thm}[Index~four Fano hypersurfaces]
There are $78$ Fano threefold hypersurfaces of index~four with canonical singularities. Of these $78$ cases:
\begin{enumerate}
\item
$67$ are of the form $X_d\subset\P(a_1,a_2,a_3,a_4,4)$, where $S_d\subset\P(a_1,a_2,a_3,a_4)$ is one of the $95$ K3 hypersurfaces, and so have a quasismooth K3 hypersurface elephant;
\item
Six cases, $X_3\subset\P( 2, 2, 1, 1, 1, 3 )$, $X_4\subset\P( 2, 2, 2, 1, 1, 4 )$, $X_4\subset\P( 3, 2, 1, 1, 1, 4 )$, $X_5\subset\P( 3, 2, 2, 1, 1, 5 )$, $X_6\subset\P( 3, 3, 2, 1, 1, 6 )$, and $X_6\subset\P( 3, 2, 2, 2, 1, 6 )$ have a quasismooth K3 elephant in codimension~two.
\item
Five cases, $X_{20}\subset\P( 8, 6, 5, 4, 1, 20 )$, $X_{28}\subset\P( 10, 8, 7, 4, 3, 28 )$, $X_{28}\subset\P( 14, 8, 5, 4, 1, 28 )$, $X_{36}\subset\P( 18, 8, 7, 4, 3, 36 )$, and $X_{44}\subset\P( 22, 9, 8, 5, 4, 44 )$ do not have a quasismooth K3 elephant.
\end{enumerate}
\end{thm}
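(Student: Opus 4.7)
The plan is to run the quasismooth algorithm of \S\ref{sec!alg} on input $(s,k)=(5,-4)$, process the resulting candidate list as in the proof of Theorem~\ref{thm:index_2_dim_2}, and then carry out an elephant analysis case-by-case. The algorithm outputs a collection of sporadic candidates together with a finite number of one-parameter series of $(a_1,\dots,a_5,d)$ satisfying $d=\sum a_i-4$. Each series is tested for well-formedness and quasismoothness via the hyperbola trick, the codimension-two failure criterion (Lemma~\ref{lem!wfr}), and the proportional-strata quasismoothness test (Lemma~\ref{lem!wfrh}), exactly as in the index~two and index~three results. Applying the singularity analysis of \S2.5 — listing transverse quotient singularities along every orbifold stratum and testing each with the Reid--Shepherd-Barron--Tai criterion — filters the output to the $78$ claimed cases.

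Having obtained the $78$ cases, the elephant classification proceeds by examining $|\cO_X(4)|=|{-K_X}|$. In the $67$ cases of part~(1), one weight equals $4$; calling the corresponding variable $t$, the general quasismooth $F$ contains a pure monomial $t^{m}$ or a tangent monomial involving $t$, so the hyperplane section $S=X\cap(t=0)$ is cut transversely from $\P(a_1,\dots,\widehat{4},\dots,a_5)$, inherits quasismoothness, and has $\omega_S=\cO_S$ by adjunction. It then remains to check mechanically that $(d;a_1,\dots,\widehat{4},\dots,a_5)$ appears among Reid's famous~$95$, and indeed all $67$ do.

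The remaining $11$ cases split as follows. For the six cases of part~(2), no weight $4$ is usable (either $4\notin\{a_i\}$ or the index~$4$ coordinate point is tangent to $X$), so no single degree~$4$ equation cuts out a quasismooth hypersurface elephant; instead, a general element of $|{-K_X}|$ is realised as a codimension~two Calabi--Yau threefold inside an ambient weighted projective space one dimension larger, and one exhibits an explicit quasismoothing pair of equations to confirm this. For the five cases of part~(3), the task is the opposite: to prove that \emph{no} element of $|{-K_X}|$ can be a quasismooth K3. This is done by enumerating the degree~$4$ monomials available on $X$, writing down the general anticanonical section $V$, and exhibiting at a singular locus of $V$ a transverse quotient singularity that fails the Reid--Shepherd-Barron--Tai test — the obstruction being of the same flavour as the one for $X_{23}\subset\P(11,4,3,3,2,1)$ explained after Theorem~\ref{thm!fano4}.

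The main obstacle is part~(3). Part~(1) reduces to matching a finite list against the famous~$95$, and part~(2) is a positive check by direct construction, but part~(3) is a negative statement quantifying over all members of a linear system, which must be established by ruling out every possible tangent-monomial configuration at every singular point forced onto the elephant. This is a finite but delicate verification for each of the five exceptional families, and is where the bulk of the work beyond running the algorithm lies.
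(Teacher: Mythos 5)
Your proposal follows essentially the same route as the paper: the theorem is obtained by running the quasismooth algorithm with $(s,k)=(5,-4)$, post-processing the series and sporadic output as in the index-two and index-three cases, filtering by the Reid--Shepherd-Barron--Tai criterion, and then checking anticanonical elephants case by case (restriction to a weight-$4$ hyperplane for the $67$ matching the famous $95$, explicit codimension-two models for the six exceptional cases, and ruling out quasismooth K3 sections for the remaining five). This matches the paper's computer-assisted methodology, which it does not spell out in more detail than you have.
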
 

\noindent
Continuing in this fashion -- listing Fano threefolds with canonical singularities of higher index -- there are $46$ cases in index~five, of which $43$ have hypersurface K3 elephants, whilst $X_4\subset\P(3,2,2,1,1)$, $X_6\subset\P(3,3,2,2,1)$, and $X_6\subset\P(4,3,2,1,1)$ have codimension~two elephants. There are $88$ cases in index~six, of which $69$ have a hypersurface elephant, $17$ have a codimension~two elephant, and two cases -- $X_{24}\subset\P( 9, 8, 6, 5, 2)$ and $X_{24}\subset\P(  11, 9, 8, 6, 5)$ -- have no quasismooth K3 elephant.

\subsection{Fano fourfolds}
Our main results are the cases $k=-1$,~$0$, and~$1$. When $k\ge0$ there are no infinite series and the raw output of the algorithm is ready to use.

\begin{table}[ht]
\caption{Summary of results for fourfolds, including the number of canonical, terminal, and smooth cases that occur among the series and sporadic results.}\label{tab!4d}
\setlength{\extrarowheight}{0.2em}
\begin{tabular}{cccccccc}
\toprule
$\dim$&$k$&\#series&\#sporadic&\#can&\#term&\#sm\\
\cmidrule(lr){1-2}\cmidrule(lr){3-7}
\oddrow $4$&$-2$&$\phantom{{}^\dagger}4151^\dagger$&$\phantom{{}^{\dagger\!\dagger}}2\,088\,986^{\dagger\!\dagger}$&$15\,051$&$2304$&$2$\\
\evnrow $4$&$-1$&$1597$&$1\,233\,322$&$11\,618$&$11\,618$&$4$\\
\oddrow $4$&$0$&$0$&$1\,100\,055$&$1\,100\,055$&$33$&$2$\\
\evnrow $4$&$1$&$0$&$1\,338\,926$&$649$&$649$&$6$\\
\oddrow $4$&$2$&$0$&$2\,337\,581$&$1373$&$504$&$2$\\
\evnrow $4$&$3$&$0$&$1\,318\,278$&$1200$&$636$&$7$\\
\oddrow $4$&$4$&$0$&$2\,258\,837$&$2079$&$596$&$3$\\
\evnrow $4$&$5$&$0$&$1\,291\,194$&$1651$&$1017$&$5$\\
\bottomrule
\end{tabular}
\parbox{0.75\textwidth}{\footnotesize${}^\dagger$Experimental result based on 100 initial terms; series rejected by that test have not been proved to have no well-formed quasismooth members.\newline${}^{\dagger\!\dagger}$Sporadic elements that lie in one of the series have not been removed from this list.}
\end{table}

We describe the case $k=-1$ in more detail in Steps~\hyperlink{step:1}{$1$}--\hyperlink{step:5}{$5$} below. In this case raw output consists of four two-parameter series, $1611$ one-parameter series, and $1\,234\,076$ sporadic cases.

\subsubsection*{Step 1}\hypertarget{step:1}{}
The two-parameter series do not contain any good elements. The first case is the series corresponding to integral points in the polyhedron
\[
(15,10,3,2,1,1,31)+\cone{(15,10,3,2,0,0,30),(15,10,3,2,2,0,32)}.
\]
The cone is not regular, but has three semigroup generators, including $(15,10,3,2,1,0,31)$, so its integral points are all of the form
\[
\left(15N,10N,3N,2N,1+n_2+2n_3,1,30N+1+n_2+2n_3\right),
\]
where $N:=n_1+n_2+n_3+1$ and $n_1,n_2,n_3\ge 0$. These integral points correspond to
\[
X_{30N+1+n_2+2n_3}\subset\P(15N,10N,3N,2N,1+n_2+2n_3,1).
\]
When $N>0$ this is not well-formed. The vertex $(15,10,3,2,1,1,31)$ does work, and corresponds to $X_{31}\subset\P(15,10,3,2,1,1)$, but this is already contained in the sporadic list.

The other three cases are
\begin{align*}
(4,3,3,2,1,1,13)&+\cone{(4,3,3,2,0,0,12),(4,3,3,2,2,0,14)},\\
(4,3,3,2,2,1,14)&+\cone{(4,3,3,2,0,0,12),(4,3,3,2,2,0,14)},\\
\text{ and }\qquad
(15,10,3,2,2,1,32)&+\cone{(15,10,3,2,0,0,30),(15,10,3,2,2,0,32)}.
\end{align*}
They follow the same pattern as above; in each case the hypersurface corresponding to the vertex is already in the sporadic list.

\subsubsection*{Step 2}\hypertarget{step:2}{}
$1597$ of the one-parameter families match those predicted by the Johnson--Koll\'ar conjecture, so they all work. We normalise them so they are the intersection of an affine line with the strict positive quadrant, irrespective of ordering of the vertex.

\subsubsection*{Step 3}\hypertarget{step:3}{}
The remaining $14$ one-parameter series, listed in Table~\ref{tab!1d14}, do not contain infinitely many good members.
\begin{table}[ht]
\caption{Fourteen 1-parameter series excluded at Step~3.}\label{tab!1d14}
\setlength{\extrarowheight}{0.2em}
\begin{tabular}{rcl}
\toprule
\oddrow $(105,53,30,21,2,2,212)$&$+$&$\cone{(106,53,30,21,2,0,212)}$\\
\evnrow $(109,55,27,24,4,2,220)$&$+$&$\cone{(110,55,27,24,4,0,220)}$\\
\oddrow $(117,59,33,21,5,2,236)$&$+$&$\cone{(118,59,33,21,5,0,236)}$\\
\evnrow $(117,59,39,18,2,2,236)$&$+$&$\cone{(118,59,39,18,2,0,236)}$\\
\oddrow $(129,65,36,21,8,2,260)$&$+$&$\cone{(130,65,36,21,8,0,260)}$\\
\evnrow $(157,79,63,15,1,2,316)$&$+$&$\cone{(158,79,63,15,1,0,316)}$\\
\oddrow $(165,83,66,15,2,2,332)$&$+$&$\cone{(166,83,66,15,2,0,332)}$\\
\evnrow $(181,91,72,15,4,2,364)$&$+$&$\cone{(182,91,72,15,4,0,364)}$\\
\oddrow $(193,97,45,28,24,2,388)$&$+$&$\cone{(194,97,45,28,24,0,388)}$\\
\evnrow $(213,107,84,15,8,2,428)$&$+$&$\cone{(214,107,84,15,8,0,428)}$\\
\oddrow $(217,109,40,36,33,2,436)$&$+$&$\cone{(218,109,40,36,33,0,436)}$\\
\evnrow $(217,109,69,22,18,2,436)$&$+$&$\cone{(218,109,69,22,18,0,436)}$\\
\oddrow $(277,139,108,16,15,2,556)$&$+$&$\cone{(278,139,108,16,15,0,556)}$\\
\evnrow $(301,151,117,19,15,2,604)$&$+$&$\cone{(302,151,117,19,15,0,604)}$\\
\bottomrule
\end{tabular}
\end{table}
For example, consider the series
\[
(213, 107, 84, 15, 8, 2, 428)+\cone{(214, 107, 84, 15, 8, 0, 428)},
\]
expressed as
\[
X_{428(n+1)}\colon (F=0) \subset \P(214(n+1) - 1, 107(n+1), 84(n+1), 15(n+1), 8(n+1), 2).
\]
The $x_3$-$x_4$ stratum $\Gamma$ is contained in any $X$ (by mod~$3$ and mod~$9$ congruence). But we see that $x_1$, $x_2$, and $x_6$ cannot be tangent forms along $\Gamma$ unless $n=0$. For example, if $x_3^ax_4^bx_6\in F$ then $84a + 15b + 2/(n+1) = 428$, hence $n=0$ or $1$; and since $b$ must be even, $n\neq 1$. Similarly $x_3^ax_4^bx_1\notin F$. If $x_3^ax_4^bx_2\in F$ then, again by computing degrees and dividing by $2(n+1)$, we have that $28a + 5b = 107$, but this has no solutions with $a,b\in\N$. We conclude that $x_5$ is the only tangent form along $\Gamma$, and so $X$ is not quasismooth unless $n=0$. We add this initial $n=0$ element to the sporadic list and exclude the series.
The remaining cases work similarly: in each case we need to record the initial element but no others. 

At the end of this step, we have excluded the two-parameter series and $14$ of the one-parameter series, and have added $14$ additional elements to the sporadic list.

\subsubsection*{Step 4}\hypertarget{step:4}{}
Remove any members of the sporadic list that lie in the series. The form of the Johnson--Koll\'ar conjecture helps: rather than testing membership, one can simple observe when a sporadic case is of the right form; this also finds sporadic elements who lie in a series after re-ordering. There are $768$ sporadic elements that lie in the series, of which $381$ only do so after re-ordering; we remove these cases from the sporadic list.

\subsubsection*{Step 5}\hypertarget{step:5}{}
Check for canonical singularities in the sporadic list and the series. Again, the form of the Johnson--Koll\'ar conjecture helps: every member of a series has the form
\[
X_{2h\sum b_i}\subset\P(-1+h\sum b_i, hb_1,\dots,hb_{s-2}, 2),
\]
where $n\ge 3$ and $h\ge 1$ is odd. Whenever $h>1$, the $(s-3)$-stratum $\P(hb_1,\dots,hb_{s-2})$ has non-trivial stabiliser $\Z/h$, and $X$ intersects this in codimension~two on~$X$. Its transverse quotient type is $\frac{1}{h}(h-1,2)$, which is not canonical when $h\ge3$ and so, in particular, $X$ does not have canonical singularities when $h>1$. Hence we need only check the sporadic cases and the first member of each series.

\subsection{Higher index Fano fourfolds: experimental results}
There is no known sharp upper bound for the index of Fano fourfold hypersurfaces. For Fano threefolds, Suzuki~\cite[Theorem 0.3]{suzuki} proves that the highest Fano index is realised by a weighted projective space, and Prokhorov~\cite[Theorem 1.4]{prokhorov} proves that only weighted projective space achieves this. Taking this as a guide, one may expect the highest index of a ($\Q$-factorial terminal) Fano weighted projective space to be an upper bound. The classification of all such $\P(a_1,\dots,a_5)$ is known~\cite[Theorem 3.5]{kasprzyk}: there are $28\,686$ cases in total, and the highest index is $881$, realised by $\P(430,287,123,21,20)$. Although we have the raw results in a few higher indices, we only discuss the case of index~two here, where we give an experimental overview, as we explain below. The raw output in this case consists of $5799$ series and of $2\,088\,986$ sporadic cases.

For all results so far we have taken the raw output from the algorithm and analysed it in detail, proving non-existence individually for those series we reject. In fact, those series that worked always did so because there is some small periodicity in the behaviour of their elements: typically every other element represents a well-formed quasismooth hypersurface. This has given us computer-assisted rigorous proofs. \emph{We now adopt a more experimental approach}: in what follows we reject any series that does not have a well-formed quasismooth element in its first $100$ terms (in practice we have experimented with increasing the number of terms considered, and have found no difference in outcome).

There are five two-dimensional series, but these do not contribute anything other than existing one-dimensional series or sporadic results, so we reject them. Among the remaining $5794$ one-dimensional series, $1643$ have no good cases among the first $100$ elements, and so we reject these too. This leaves $4151$ one-dimensional series. These fall into several different types. The largest is a natural generalisation of the Johnson--Koll\'ar series, but with $a_6=4$:
\begin{align*}
X_{d^{(k)}}\subset\P(a_1^{(k)},\dots,a_6^{(k)})&\qquad\text{ for even }k\ge 0,\text{ where }\\
(a_1^{(k)},\dots,a_6^{(k)},d^{(k)})&=\left(\sum b_i-2,b_1,\dots,b_4,4,2\sum b_i\right)+k\left(\sum b_i,b_1,\dots,b_4,0,2\sum b_i\right)
\end{align*}
for each of the $2390$ well-formed quasismooth Calabi--Yau threefolds $S_{2\sum b_i}\subset\P(\sum b_i,b_1,\dots,b_4)$. Note that, in contrast to the Johnson--Koll\'ar correspondence in index~one, we do not restrict to those $1597$ cases with well-formed $\P(b_1,\dots,b_4)$.

Another large type is similar: there are $1585$ cases with $a_6=3$ of the form
\[
(a_1^{(k)},\dots,a_6^{(k)},d^{(k)})=\left(b_1-1,b_2,b_3,b_4,b_5,3,\sum b_i\right)+k\left(b_1,b_2,b_3,b_4,b_5,0,\sum b_i\right)
\]
where $X_{\sum b_i}\subset\P(b_1,b_2,b_3,b_4,b_5)$ is a Calabi--Yau threefold hypersurface
that is a triple cover $\sum b_i = 3b_1$ (in some cases after reordering so that $b_1$ is no longer the biggest of the $b_i$), and $k$ is constrained modulo~$3$.

The remaining cases also form patterns:
\begin{description}
\item[3 cases] ($a_5=a_6=1$)
For all $k\ge0$
\[
( a_1^{(k)}, \dots, a_6^{(k)}, d^{(k)}) =
\left(b_1,b_2,b_3,1,1,1,\sum b_i+1\right) + k\left(b_1,b_2,b_3,0,0,0,\sum b_i\right)
\]
where $(b_1,b_2,b_3) = (1,1,1)$, $(2,1,1)$ or $(3,2,1)$.
\item[95 cases]  ($a_5=a_6=1$)
For all $k\ge0$
\[
( a_1^{(k)}, \dots, a_6^{(k)}, d^{(k)}) =
\left(b_1,b_2,b_3,b_4,1,1,\sum b_i\right) + k\left(b_1,b_2,b_3,b_4,0,0,\sum b_i\right)
\]
where $X_{\sum b_i}\subset\P(b_1,b_2,b_3,b_4)$ is any of the $95$ K3 hypersurfaces.
\item[48 cases]  ($a_5=2$, $a_6=1$) For all $k\ge0$
\[
( a_1^{(k)}, \dots, a_6^{(k)}, d^{(k)}) =
\left(b_1-1,b_2,b_3,b_4,2,1,\sum b_i\right) + k\left(b_1,b_2,b_3,b_4,0,0,\sum b_i\right)
\]
where $X_{\sum b_i}\subset\P(b_1,b_2,b_3,b_4)$ is any of the $48$ K3 hypersurfaces that are double covers. A further \textbf{8 cases} arise from these K3 surfaces, also with $a_5=2$, $a_6=1$, but with more adjustments to the first three entries $b_1, b_2, b_3$ of the initial term.
\item[22 cases] ($a_6=3$)
For  $k$ subject to a condition modulo~$3$, with kernels similarly determined by certain Calabi--Yau threefolds, but where the weights of the initial term of the series is half the kernel after two entries are slightly modified. For example:
\begin{align*}
(15,6,5,3,3,3,33)&+k(33,11,10,6,6,0,66)
\quad\text{ for }k\equiv 2\mod 3,\\
\text{ and }\quad
(9,5,4,1,1,3,63)&+k(21,10,7,2,2,0,42)
\quad\text{ for }k\equiv 1\text{ or }2\mod 3.
\end{align*}
\end{description}
We do not see a single systematic pattern to this classification of infinite series, but it does still seem to be determined by Calabi--Yau hypersurfaces of lower dimension.

At this stage it would also be possible also to run through the sporadic cases to determine which lie in one of the series. We have not done this, so the number we give for the sporadic classification is an upper bound.

\subsection{Higher dimensions}
\subsubsection*{Nonsingular weighted hypersurfaces}
If $X_d\subset\P(a_1,\dots,a_s)$ is nonsingular then
\[
Y_d^{(n)}\subset\P(a_1,\dots,a_s,\stackrel{n}{\overbrace{1,\dots,1}})
\]
is a nonsingular $(n+s-2)$-fold for any $n\ge0$. The converse holds too: smoothness for any $n\ge0$ implies smoothness for them all. Whilst it is easy to make nonsingular hypersurfaces of general type with empty canonical system -- for example, $X_{30}\subset\P(5,3,2)$ -- that necessarily involves high degree equations, $d=a_1\cdots a_s$ at least. So it is not a surprise that the numbers of smooth cases in dimensions~three and~four in Tables~\ref{tab!3d} and~\ref{tab!4d} match exactly the numbers of smooth surfaces in Table~\ref{tab!2d}. We list the first cases in Table~\ref{tab!sm}, presented as (not necessarily well-formed) orbifold zero-dimensional schemes $Z_d\subset\P(a_1,a_2)$.

\begin{table}[ht]
\caption{The orbifold zero-dimensional schemes $Z_d\subset\P(a_1,a_2)$ of index $I\leq 10$}\label{tab!sm}
\centering
\setlength{\extrarowheight}{0.2em}
\begin{tabular}{ccl}
\toprule
$I$&$\#$&$(a_1,a_2,d)$ of index $I = d-a_1-a_2$\\
\cmidrule(lr){1-2}\cmidrule(lr){3-3}
\oddrow $0$&$1$&$(1,1,2)$\\
\evnrow $1$&$3$&$(1,1,3),  (2,1,4),  (3,2,6)$\\
\oddrow $2$&$2$&$(1,1,4),  (3,1,6)$\\
\evnrow $3$&$4$&$(1,1,5),  (4,1,8),  (2,1,6),  (5,2,10)$\\
\oddrow $4$&$2$&$(1,1,6),  (5,1,10)$\\
\evnrow $5$&$6$&$(1,1,7),  (6,1,12),  (2,1,8),  (3,1,9),  (4,3,12),  (7,2,14)$\\
\oddrow $6$&$2$&$(1,1,8),  (7,1,14)$\\
\evnrow $7$&$7$&$(1,1,9),  (8,1,16),  (2,1,10),  (4,1,12),  (3,2,12),  (5,3,15),  (9,2,18)$\\
\oddrow $8$&$3$&$(1,1,10),  (9,1,18),  (3,1,12)$\\
\evnrow $9$&$5$&$(1,1,11),  (10,1,20),  (2,1,12),  (5,1,15),  (11,2,22)$\\
\oddrow $10$&$2$&$(1,1,12),  (11,1,22)$\\
\bottomrule
\end{tabular}
\end{table}

\subsubsection*{Series of Fano five-folds}
Fano five-folds that have quasismooth Calabi--Yau fourfold elephants arise directly from the $1\,100\,055$ hypersurfaces: simply include an additional~one amongst the weights.  But we can also use Calabi--Yau fourfolds to describe infinite series of anticanonically-polarised five-fold hypersurfaces using Johnson and Koll\'ar's construction: $360\,346$ of the Calabi--Yau hypersurfaces are double covers $V_{2\sum b_i}\subset\P(\sum b_i,b_1,\dots,b_5)$, and of these $261\,195$ have $\P(b_1,\dots,b_5)$ well-formed. Each of these determine an infinite series according to the Johnson--Koll\'ar recipe of~\S\ref{s!jkconj}. In each series, only the initial element could possibly have terminal singularities, and $31\,400$ Fano five-folds of the form $X_{2\sum b_i}\subset\P(-1+\sum b_i,b_1,\dots,b_5,2)$ arise in this way. In fact $25\,276$ of these $31\,400$ cases already arise in the usual way by including an additional~one among the weights of a Calabi--Yau fourfold: cases like $X_{10}\subset\P(4,1,1,1,1,1,2)$ can be realised by both approaches.

\subsection{Notes on computation and raw results}
Our implementation of the algorithm can be downloaded from~\cite{grdb}. The results can be downloaded as plain-text files, or may be queried online. The code was run on version $2.21\text{-}1$ of the Magma computer algebra system~\cite{magma}.

\begin{table}[ht]
\caption{Raw data from the algorithm in dimension $2$.}\label{tab!raw2}
\centering
\begin{tabular}{rgwgwgwgwgwg}
\toprule
$k$&$-5$&$-4$&$-3$&$-2$&$-1$&$0$&$1$&$2$&$3$&$4$&$5$\\
\cmidrule{1-12}
\#series & 14  & 17 & 6 &  9  & 1 &  0 &  0 & 0 &  0  & 0 & 0  \\
\#sporadic &  11 & 21 & 14 & 37  & 22 & 95 & 62 & 205 & 103  & 276 & 96 \\
\cmidrule{1-12}
Runtime (s) & 2 & 3 & 2 & 2 & 1 & 0.5 & 1 & 3 & 4 & 8 & 10  \\
\bottomrule
\end{tabular}
\end{table}

\begin{table}[ht]
\caption{Raw data from the algorithm in dimension $3$.}\label{tab!raw3}
\centering
\begin{tabular}{ryxyxyxyxyxy}
\toprule
$k$&$-5$&$-4$&$-3$&$-2$&$-1$&$0$&$1$&$2$&$3$&$4$&$5$\\
\cmidrule{1-12}
\#series & 90 & 133 & 59 & 85 & 25 &  0 &  0 & 0 &  0  & 0  &  0  \\
\#sporadic & 3178 & 6065 & 4354 & 7102 & 4450 &  7555 & 6448 & 11762 &  8298  & 13305  & 7007 \\
\cmidrule{1-12}
Runtime (s)& 289 & 347 & 176 & 149 & 72 & 59 & 171 & 392 & 649 & 1154 & 1225   \\
\bottomrule
\end{tabular}
\end{table}

\begin{table}[ht]
\caption{Raw data from the algorithm in dimension 4.}\label{tab!raw4}
\centering
\begin{tabular}{rhHhHh}
\toprule
$k$&$-5$&$-4$&$-3$&$-2$&$-1$\\
\cmidrule{1-6}
\#series & 7461 & 9108 & 5091 & 5799 & 1615   \\
\#sporadic & 1\,041\,428 & 1\,855\,631 & 1\,129\,239  &  2\,088\,986  &  1\,234\,076     \\
\cmidrule{1-6}
Sing.\ time (s)& 9728 & 16393 & 9032 & 8082 &  3450   \\
\bottomrule
\end{tabular}\hspace{8em}\vspace{0.5em}

\hspace{8em}\begin{tabular}{HhHhHh}
\toprule
$0$&$1$&$2$&$3$&$4$&$5$\\
\cmidrule{1-6}
0 &  0 & 0 &  0  & 0  &  0   \\
1\,100\,055  &  1\,338\,926  &  2\,337\,581   &  1\,318\,278   &  2\,258\,837  &   1\,291\,194     \\
\cmidrule{1-6}
1391  & 4065  & 5335  & 4053 & 5310 & 3720  \\
\bottomrule
\end{tabular}
\end{table}

Calculations in dimensions~two and~three for small $k$ can easily be carried out on a single computer. In Tables~\ref{tab!raw2} and~\ref{tab!raw3} we indicate approximate timings on a standard laptop, along with the size of the raw output. In dimension~four the search tree is too large to be tackled reasonably in a single run. Instead, we run the algorithm to recursive depth~one and compute the possible monomials at the first coordinate point~$P_1$. We then run each of these as a single process on a high-performance computing cluster of about $200$ cores, together with a data management layer to handle results, failed processes, and so on. This runs in the order of an hour. Text files containing the results, and functions to parse them as Magma input, are available at~\cite{grdb}. We record the numbers of raw output and the timings to compute singularities in Table~\ref{tab!raw4}.

\subsection*{Acknowledgments}
We are grateful to Miles Reid and Olof Sisask who explained this algorithm to us in the first place, and whose re-implementation of it in $2005$ to recalculate the $7555$ Calabi--Yau threefolds for the database at~\cite{grdb} was our starting point, and also to Jennifer Johnson and J\'anos Koll\'ar for discussion of their conjecture. Our thanks to John Cannon for providing Magma for use on the Imperial College mathematics cluster and to Andy Thomas for technical support. This work was supported in part by EPSRC grant~EP/E000258/1. AK is supported by EPSRC grant~EP/I008128/1 and ERC Starting Investigator Grant number~240123.
\bibliographystyle{plain}

\end{document}